\newtheorem{mythm}{Theorem}[section]
\newtheorem{mylem}[mythm]{Lemma}
\newtheorem{myrem}[mythm]{Remark}}
\newtheorem{myexam}[mythm]{Example}}%
\def\R{\mathbb R}
\def\N{\mathbb N}
\def\C{\mathscr C}
\def\F{\mathscr F}
\def\E{\mathbb E}
\def\p{\mathbb P}
\def\e{\text{\rm{e}}}
\def\d{\text{\rm{d}}}
\def\La{\Lambda}
\def\veps{\varepsilon}
\def\S{\mathcal S}
\def\wt{\widetilde}
\def\W{\mathbb{W}}
\newenvironment{proof}{{\noindent\it Proof.}\ }{\hfill $\square$\par}
\numberwithin{equation}{section}
\begin{document}

\title{Ergodicity and stability of hybrid systems with piecewise constant type state-dependent switching \footnote{Supported in
 part by National Key R\&D Program of China (No. 2022YFA1000033) and NNSFs of China (No. 12271397, 11831014,12101186)}}

\author{ Jinghai Shao$^a$\thanks{a: Center for Applied Mathematics, Tianjin University, Tianjin 300072, China.
}, Lingdi Wang$^b$\thanks{b: School of Mathematics and Statistics, Henan University, Kaifeng 475001, China.}, Qiong Wu$^a$}
\date{}
\maketitle

\begin{abstract}
  To deal with stochastic hybrid systems with general state-dependent switching, we propose an approximation method  by a sequence of stochastic hybrid systems with piecewise constant type switching. The convergence rate in  the Wasserstein distance is estimated in terms of the difference between transition rate matrices. Our method is based on an elaborate construction of coupling processes in terms of Skorokhod's representation theorem for jumping processes.  Moreover, we establish explicit criteria on the ergodicity and stability for stochastic hybrid systems with piecewise constant type switching. Some examples are given to illustrate the sharpness of these criteria.
\end{abstract}

\noindent Keywords: Regime-switching, Ergodicity, Stability, Hybrid system

\noindent AMS MSC 2010: 60A10;  60J60; 60J10

\section{Introduction}

Stochastic hybrid systems can model the interaction between continuous dynamics and discrete dynamics, and because of their versatility, they have been widely used as effective models for capturing the intricacies of complex systems; see, for instance, applications in mathematical finance \cite{GZ04,SC09}, in biology \cite{Cru,Font}, in biochemistry \cite{RRK} and references therein. These processes are also called stochastic processes with regime-switching. See the monograph \cite{YZ} for more introduction on various applications of such models. Much effort has been devoted to the study of long time behavior of such processes. We refer the reader to \cite{PP,Sh15a} for the exponential ergodicity in the total variance distance, and to \cite{CH,Sh15b} in the Wasserstein distance; to \cite{BBG,MY,SX,YZ} for the stability in various sense; to \cite{Bar,LS20,SY05} for the characterization of the invariant probability measures.

Due to the intensive interaction between the continuous component and the discrete component, it is a  very challenging task to characterize the long time behavior of stochastic hybrid systems with state-dependent regime-switching, and considerable effort has been devoted to this topic.   In the existing literatures, two kinds of methods have been developed to deal with the state-dependent regime-switching processes. One is to construct suitable coupling process to control the state-dependent regime-switching process with a state-independent one. This method has been developed in \cite{CH,MT,Sh18,Sh22,SX19}. Another method is to construct directly the desired Lyapunov function by viewing the whole system as a Markov process; see, for example, the monograph \cite{YZ} and the recent works \cite{Sh15a,SX} based on $M$-matrix theory. Generally, these two methods could provide certain verifiable conditions at the cost of sharpness. In this work, we shall develop an alternative method: approximation method.

The basic idea of approximation method is:  instead of considering directly a stochastic hybrid system with a general state-dependent switching process, we approximate it with a sequence of stochastic hybrid  systems with piecewise constant type state-dependent switching. Heuristically, we want to approximate a general continuous matrix-valued function with a sequence of step functions, i.e. piecewise constant matrix-valued functions.  As a special class of state-dependent regime-switching processes, we can provide a relatively complete characterization on the ergodicity and stability for stochastic hybrid systems with piecewise constant type switching. Furthemore, the approximation is measured using the Wasserstein distance between the distributions of two processes, and the convergence rate can also be estimated in terms of the distance between the transition rate matrices. Meanwhile, it is also necessary to study stochastic hybrid systems with piecewise constant type switching from the application point of view. For instance, in \cite{RRK}, the authors proposed a stochastic  hybrid system to model the biodiesel reaction. There, the switching will happen  once the continuous component leaves certain invariant regions.


Precisely, give a filtered probability space $(\Omega,\F,\p)$ with filtration $\{\F_t\}_{t\geq 0}$. Consider a stochastic hybrid system $(X_t,\La_t)_{t\geq 0}$  as follows: the continuous component $(X_t)$ satisfies the SDE
\begin{equation}\label{a-1}
\d X_t=b(X_t,\La_t)\d t +\sigma(X_t,\La_t)\d B_t,
\end{equation} where $(B_t)_{t\geq 0}$ is a $d$-dimensional $\F_t$-adapted Brownian motion. The discrete component $(\La_t)_{t\geq 0}$ is a jumping process on a finite state space $\S=\{1,2,\ldots,N\}$ satisfying
\begin{equation}\label{a-2}
\p(\La_{t+\delta}=j|\La_t=i,X_t=x)=\begin{cases}
q_{ij}(x)\delta+o(\delta), &i\neq j,\\
1+q_{ii}(x)\delta+o(\delta), &i=j,
\end{cases}
\end{equation} for $\delta>0$. For each $x\in \R^d$, $(q_{ij}(x))_{i,j\in\S}$ is a $Q$-matrix. If $(q_{ij}(x))_{i,j\in \S}$ is a $Q$-matrix independent of $x$, then $(X_t,\La_t)_{t\geq 0}$ is  called a Markovian  stochastic hybrid system or a Markovian regime-switching process.

In the existing literatures on the state-dependent hybrid systems, it is often assumed a priori that the state-dependent switching function $x\mapsto q_{ij}(x)$ is a continuous function for every $i,j\in \S$. See, for instance, \cite{GAM,NY18,Sh15}, monograph \cite{YZ}.  Step function is a widely used substitute or approximation of continuous function in many research field. However, the study on the hybrid system  with  switching rates being  a step function in $x$ is very limited.   Owing to the demand of realistic application,   the previous model has been extended to include the impact of the history of the process $(X_t)$, so that $q_{ij}(\cdot)$ becomes a functional on the path space $C([-\tau, 0];\R^d)$; see, for example,  \cite{NY16} and \cite{SZ21}.

In this work, we are interested in the following kind of discontinuous switching functions in the form:
\begin{equation}\label{a-3}
q_{ij}(x)=\sum_{k=1}^{m+1} q_{ij}^{(k)}\mathbf1_{[\alpha_{k-1},\alpha_k)} (|x|),\quad \ i,j\in\S, \ x\in \R^d,
\end{equation} where  $ \Delta_{m}:=\{0=\alpha_0< \alpha_1<\ldots<\alpha_{m}<\alpha_{m+1}\!=\!\infty \}$ for some $m\in \N$ is a finite division of $[0, \infty)$, and $(q_{ij}^{(k)})_{i,j\in\S}$ are  $Q$-matrices for $k=1,\ldots,m+1$.
When $\R^d=\R$, besides \eqref{a-3}, we also consider a more general type of switching function:
\begin{equation}\label{a-4}
q_{ij}(x)=q_{ij}^{(0)} \mathbf1_{(-\infty,\alpha_0)}(x) + \sum_{k=1}^{m+1} q_{ij}^{(k)}\mathbf1_{[\alpha_{k-1},\alpha_k)}(x) , \quad i,j\in\S, \, x\in \R,
\end{equation}
where $(q_{ij}^{(k)})$ are $Q$-matrices on $\S$, $m\in \N$, and
\[\{-\infty<\alpha_{0}<\alpha_{1}<\ldots  <\alpha_{m}<\alpha_{m+1} =\infty\} \]
is a finite partition of $\R$.

We call a stochastic hybrid system $(X_t,\La_t)_{t\geq 0}$ with $(\La_t)_{t\geq 0}$ satisfying \eqref{a-2}, \eqref{a-3} or \eqref{a-4} when $d=1$ a \emph{stochastic hybrid system with piecewise constant type switching}.

The main reason to study the switching function  in the form \eqref{a-3} is its simplicity, which will be seen from our results on the egodicity and stability of such kind of hybrid system, compared especially with those for hybrid systems with a general state-dependent regime-switching. Just because of its simplicity, such functions are widely used in various research fields. For instance, in the control theory, the bang-bang control as the simplest control algorithm is  widely used in many types of industrial control systems. See, e.g. \cite{MS} for the bang-bang principle for the linear control system. Moreover, the optimal control policy in the stochastic control problem in \cite{HT83} is a bang-bang policy. In the study of particle system,   Cox and Durrett \cite{CD92} discovered that certain nonlinear voter models can coexist even in one dimension. Among those of greatest interest are the threshold voter models (cf. e.g. \cite{Lig}).  See \cite{Du95} for more threshold models and \cite{DL94} for the biological applications of these models of interacting particle system.

We are mainly concerned with the following three problems on the stochastic hybrid system with piecewise constant type switching in this work: 1) The wellposedness of stochastic hybrid systems with piecewise constant type switching; 2)  If the sequence of piecewise constant type $Q$-matrices $(q_{ij}^n(x))$ approximates to a $Q$-matrix $(q_{ij}(x))$ which is continuous in $x$, the approximation problem of the corresponding stochastic hybrid systems $(X_t^n)$  to $(X_t)$; 3) Stability and recurrent property of stochastic hybrid system with piecewise constant type switching. Accordingly, the contribution of this work consists of the following three topics.
\begin{enumerate}
  \item Provide conditions to ensure the wellposedness of the stochastic hybrid system $(X_t,\La_t)_{t\geq 0}$ with piecewise constant type switching.  To this end, we need to generalize the Skorokhod representation theorem to deal with the non-continuity of  $x\mapsto q_{ij}(x)$, cf., for instance, \cite{GAM}, \cite{Sko}, \cite{Sh15}, \cite{YZ}, when $x\mapsto q_{ij}(x)$ is continuous.

  \item If we use a sequence of state-dependent $Q$-matrix in the form \eqref{a-3} to approximate a state-dependent $Q$-matrix $(q_{ij}(x))$ satisfying $x\mapsto q_{ij}(x)$ being Lipschitz continuous, the corresponding hybrid systems will converge to the limit system.

  \item  Provide explicit conditions to justify  the stability in probability and ergodicity of hybrid systems with piecewise constant type switching. These conditions generalize the corresponding results for the hybrid systems with Markovian switching, and are quite sharp as being illustrated via concrete examples.
\end{enumerate}

The paper is organized as follows. In Section 2, we establish the existence and uniqueness of hybrid systems with piecewise constant type switching by developing Skorokhod's representation theorem. Section 3 is devoted to the approximation problem. We shall show that when the sequence of $Q$-matrices $(q_{ij}^{(n)}(x))$, $n\geq 1$, approximates to a $Q$-matrix $(q_{ij}(x))$ being Lipschitz continuous in $x$, then the corresponding processes $X_t^{(n)}$ will approximate $X_t$ in certain sense.  In Section 4, we investigate the stability in probability and ergodic property of stochastic hybrid systems with piecewise constant type switching. The sharpness of the obtained criteria is illustrated through examples.

\section{Wellposedness of hybrid system with piecewise constant type switching}

This section is devoted to the existence and uniqueness of hybrid systems with piecewise constant type switching. We shall first establish the existence and uniqueness of strong solution to some jump-diffusion differential equation, then show that the solution satisfies SDEs \eqref{a-1}, \eqref{a-2} and \eqref{a-3}. This generalizes the classical Skorokhold representation theorem from the continuity setting to a setting including certain special discontinuity.

Consider the hybrid system $(X_t,\La_t)$ satisfying \eqref{a-1}, \eqref{a-2} and \eqref{a-3}.
Let $b:\R^d\times\S\to \R^d$, $\sigma:\R^d\times \S\to \R^{d\times d}$, and $(q_{ij}(x))_{i,j\in\S}$ be a conservative $Q$-matrix on $\S$ for each $x\in \R^d$.

\noindent\textbf{Assumption A}:
\begin{itemize}
  \item[$\mathrm{(A1)}$] There exists $K_1>0$ such that
      \[|b(x,i)-b(y,i)|+|\sigma(x,i)-\sigma(y,i)|\leq K_1|x-y|,\quad x,y\in \R^d, \ i\in\S.\]

  \item[$\mathrm{(A2)}$] For each $k\geq 0$, the $Q$-matrix $(q_{ij}^{(k)})_{i,j\in\S}$ in \eqref{a-3} or \eqref{a-4} is irreducible and conservative, which means that $q_i^{(k)}=-q_{ii}^{(k)}=\sum_{j\neq i}q_{ij}^{(k)}$ for every $i\in\S$.
\end{itemize}
\begin{myrem}\label{rem-1}
Condition  (A1) is used to ensure the existence and uniqueness of the solution to \eqref{a-1} as usual, which can be weakened to be non-Lipschitz as in \cite{Sh15}, or be in certain integrable space as in  \cite{Zh19}.
Condition (A2) is a standard condition in the study of continuous time Markov chains.
\end{myrem}

Let
\begin{equation}\label{K0}
K_0=\max\big\{q_{i}^{(k)};\ i\in \S, 1\leq k\leq m\!+\!1\big\}.
\end{equation}
In the spirit of Skorokhod's representation theorem \cite{Sko}, we shall construct a sequence of intervals $\Gamma_{ij}(x)$ whose length is related to $q_{ij}(x)$, and then use them to define a stochastic process in terms of a Poisson random measure, which will be our desired piecewise constant type switching process.
Precisely, put
\[\Gamma_{1k}(x)=[(k-2)K_0,(k-2)K_0 +q_{1k}(x)),\quad 2\leq k\leq N,\ \  U_1=[0,NK_0).
\]
By the definition of $K_0$ in \eqref{K0} and \eqref{a-3}, it is clear that
\[\Gamma_{1k}(x)\bigcap \Gamma_{1j}(x)=\emptyset,\quad k\neq j.\]
For $n\geq 2$,  put
\begin{equation}\label{g-1}
\begin{split}
  \Gamma_{nk}(x)&\!=\big[2(n\!-\!1)NK_0\!-\!( n\!-\!k)K_0,2(n\!-\!1)NK_0\!-\!(n\!-\!k) K_0 \!+\! q_{nk}(x)\big) \ \text{for} \ k<n,\\
  \Gamma_{nk}(x)&\!=\big[2(n\!-\!1)NK_0\!+\! (k\!-\!n \!-\!1)K_0, 2(n\!-\!1)NK_0\!+\!(k\! -\!n\!-\!1)K_0 \!+\! q_{nk}(x)\big)\ \text{for}\ k>n.
\end{split}
\end{equation}
Denote $U_n=\big[(2n-3)NK_0,(2n-1)NK_0)$, $n\geq 2$. Then $\Gamma_{nk}(x)\subset U_n$.
Let
\begin{equation}\label{g-1.5}
\kappa_0=(2N-1)NK_0,
\end{equation} then $U_n\subset [0,\kappa_0]$ for $1\leq n\leq N$.   For the convenience of notation, we put $\Gamma_{ii}(x)=\emptyset$ and $\Gamma_{ij}(x)=\emptyset$ if $q_{ij}(x)=0$.
Let
\begin{equation}\label{g-2}
\vartheta(x,i,z)=\begin{cases} j-i, &\text{if $z\in \Gamma_{ij}(x)$},\\
0, &\text{otherwise}.
\end{cases}
\end{equation}
Let us consider the SDEs
\begin{align}\label{b-1}
  \d \wt X_t&=b(\wt X_t,\wt \La_t)\d t+\sigma(\wt X_t,\wt \La_t)\d B_t,\\ \label{b-2}
  \d \wt \La_t&=\int_{[0,\kappa_0]}\!\vartheta(\wt X_t,\wt\La_{t-},z)\mathcal{N}(\d t,\d z)
\end{align} with initial value $(\wt X_0,\wt \La_0)=(x_0,i_0)\in \R^d\times\S$, where $(B_t)_{t\geq 0}$ is a $d$-dimensional Brownian motion; $\mathcal{N}(\d t,\d z)$ is a Poisson random measure over $[0,\kappa_0]$ with intensity measure $\d t\!\times\!\d z$ and is independent of $(B_t)_{t\geq 0}$.

\begin{mythm}\label{thm-1}
Assume (A1)  and (A2) hold. Then the system of SDEs \eqref{b-1}, \eqref{b-2} admits a pathwise unique strong solution $(\wt X_t,\wt \La_t)_{t\geq 0}$ for every initial value $(x_0,i_0)\in\R^d\times\S$.

Assume, in addition, that for every $t\geq 0$, $\p(|\wt X_t| =\alpha_k)=0$ for   $k=0,\ldots,m$, then $(\wt X_t,\wt \La_t)$ is a solution to  \eqref{a-1}, \eqref{a-2} with  $(q_{ij}(x))$ satisfying \eqref{a-3}.
\end{mythm}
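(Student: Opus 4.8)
The plan is to split the argument into the two assertions of the theorem, treating existence/uniqueness first and the identification of the generator second. For the first assertion, I would work on successive (random) time intervals between jumps of the discrete component. Fix the initial datum $(x_0,i_0)$. Since $\vartheta(\cdot,i,\cdot)$ is bounded and the total jump rate is uniformly bounded by $K_0$ (indeed by $NK_0$, using \eqref{K0}), the Poisson random measure $\mathcal N$ restricted to the relevant region $[0,\kappa_0]$ produces only finitely many jumps of $\wt\La_t$ on any compact time interval, almost surely. So I would define the solution iteratively: on $[0,\tau_1)$, where $\tau_1$ is the first jump time, $\wt\La_t\equiv i_0$ and $\wt X_t$ solves the ordinary SDE \eqref{b-1} with frozen regime $i_0$, which has a pathwise unique strong solution by (A1) and standard theory (Itô's existence-uniqueness theorem under global Lipschitz conditions); then at $\tau_1$ the regime jumps according to which interval $\Gamma_{i_0 j}(\wt X_{\tau_1-})$ contains the atom of $\mathcal N$, and one restarts. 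The key point making this gluing legitimate is that the jump mechanism reads off $\wt X_{t-}$ and the disjointness $\Gamma_{ij}(x)\cap\Gamma_{ik}(x)=\emptyset$ (stated in the excerpt) makes $\vartheta$ well-defined, so the post-jump regime is a deterministic function of $(\wt X_{\tau_1-},\La_{\tau_1-},z)$. Uniqueness propagates interval by interval: on each $[\tau_{n-1},\tau_n)$ the regime is constant and uniqueness is the SDE uniqueness, while the jump times and jump sizes are pathwise determined by $\mathcal N$ and the already-constructed path. To pass from the pathwise construction on $[0,\zeta)$ (with $\zeta=\lim\tau_n$) to a global solution I would argue non-explosion: because the number of switches in $[0,T]$ is dominated by a Poisson$(NK_0 T)$ random variable, $\zeta=\infty$ a.s.; and no blow-up of $\wt X$ occurs since on each regime the coefficients are globally Lipschitz (hence of linear growth), giving the usual moment bound $\E\sup_{s\le T}|\wt X_s|^2<\infty$ uniformly over the finitely many regimes. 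The main obstacle here is mostly bookkeeping: one must be careful that the ``finitely many jumps'' statement does not secretly use continuity of $x\mapsto q_{ij}(x)$ — it does not, since only the \emph{uniform bound} $K_0$ on the rates is needed, and the measurability of $x\mapsto\Gamma_{ij}(x)$ (piecewise constant, hence Borel) suffices for $\vartheta$ to be a measurable integrand.

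For the second assertion I would verify that the constructed $(\wt X_t,\wt\La_t)$ realizes the transition law \eqref{a-2} with rates \eqref{a-3}. The computation is the classical one: for $i\neq j$,
\[
\p(\wt\La_{t+\delta}=j\mid \wt\La_t=i,\wt X_t=x)
=\E\!\left[\mathcal N\big((t,t+\delta]\times\Gamma_{ij}(\wt X_{\cdot})\big)\ \text{has exactly one atom, regime still }i\text{ there}\right]+o(\delta),
\]
and since the region $\Gamma_{ij}(\wt X_s)$ for $s\in(t,t+\delta]$ is, by right-continuity of paths, equal to $\Gamma_{ij}(x)$ up to an event of probability $o(1)$ as $\delta\downarrow 0$ — this is exactly where the extra hypothesis $\p(|\wt X_t|=\alpha_k)=0$ for all $k$ enters — the Poisson measure gives intensity $|\Gamma_{ij}(x)|\,\delta=q_{ij}(x)\,\delta$ plus $o(\delta)$. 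Concretely: $\{|\wt X_t|=\alpha_k\}$ is the only set on which $x\mapsto\Gamma_{ij}(x)$ is discontinuous, so off a null set the map $s\mapsto\Gamma_{ij}(\wt X_s)$ is continuous at $s=t$ from the right, hence $\frac1\delta\int_t^{t+\delta}|\Gamma_{ij}(\wt X_s)|\,\d s\to|\Gamma_{ij}(x)|=q_{ij}(x)$ in probability; combining with the $o(\delta)$-probability of two or more atoms of $\mathcal N$ in a strip of area $O(\delta)$ yields \eqref{a-2}. The diagonal case $i=j$ follows by summing (conservativeness, (A2)) or directly from the probability of no atom in $\bigcup_{k\neq i}\Gamma_{ik}(x)$ over $(t,t+\delta]$. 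I would also note that \eqref{b-1} is literally \eqref{a-1} once we know $\wt\La$ has the right law, so nothing further is needed for the continuous component.

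I expect the genuine obstacle — the place where this is more than a routine transcription of Skorokhod's construction — to be precisely the interface between the pathwise construction and the discontinuity of the rates, i.e.\ making rigorous that the atom-selection rule ``$z\in\Gamma_{ij}(\wt X_{t-})$'' interacts correctly with the evolution of $\wt X$ near the thresholds $\{|x|=\alpha_k\}$. In the continuous-coefficient case one never worries about this because $x\mapsto\Gamma_{ij}(x)$ is continuous; here one must either (a) invoke the hypothesis $\p(|\wt X_t|=\alpha_k)=0$ only at the level of verifying \eqref{a-2} (as above), while the \emph{existence} of the pathwise solution needs no such hypothesis, or (b) worry whether the pathwise solution is the ``right'' one — but since the regime between jumps does not feel the thresholds at all (the SDE coefficients $b,\sigma$ depend on $x$ continuously by (A1)) and the jump rule is a measurable function of $(\wt X_{t-},\wt\La_{t-},z)$, no such pathology arises and the solution is unambiguous. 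A secondary technical point worth isolating is the uniform-in-$n$ estimate needed to rule out explosion when $q_{ij}(x)$ is merely bounded and piecewise constant rather than continuous; this is handled by the crude Poisson domination mentioned above, which is insensitive to regularity of the rates in $x$. I would organize the write-up as: Step 1, construction on $[0,\tau_1)$ and at $\tau_1$; Step 2, iteration and non-explosion of the jump chain; Step 3, non-explosion of $\wt X$ and hence a global pathwise unique strong solution; Step 4, verification of \eqref{a-2}–\eqref{a-3} using the no-atom-at-threshold hypothesis.
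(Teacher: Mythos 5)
Your proposal is correct and follows essentially the same route as the paper: a piecewise construction of $(\wt X_t,\wt\La_t)$ between the (a.s. finitely many, since the intensity measure on $[0,\kappa_0]$ is finite) jump times of the Poisson point process, with pathwise uniqueness inherited interval by interval from the frozen-regime SDE under (A1), followed by verification of \eqref{a-2} in which the hypothesis $\p(|\wt X_t|=\alpha_k)=0$ is used exactly where you place it, to get a.s.\ right-continuity of $s\mapsto q_{\wt\La_s j}(\wt X_s)$ at $s=t$. The only cosmetic difference is that the paper verifies \eqref{a-2} by applying It\^o's formula to $f(i)=\mathbf 1_{\{j_0\}}(i)$ and passing to the difference quotient via dominated convergence, rather than by counting atoms of $\mathcal N$ directly; the two computations are equivalent.
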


\begin{proof}
First, let us show the wellposedness of SDEs \eqref{b-1}, \eqref{b-2}. To be more precise, let us introduce a product probability space to emphasize the independence between $(B_t)_{t\geq 0}$ and $\mathcal{N}(\d t,\d z)$ in \eqref{b-1}, \eqref{b-2}.

Let $(\Omega_i,\F^i,\p_i)$, $i=1,2$, be two probability space, and
\[\Omega=\Omega_1\times\Omega_2,\ \F=\F^1\times \F^2,\ \p=\p_1\times\p_2.\]We use $\omega=(\omega_1,\omega_2)$ to denote a generic element in $\Omega$. In the following, we suppose that $(B_t)_{t\geq 0}$ is a Brownian motion defined on $(\Omega_1,\F^1,\p_1)$, and $\mathcal{N}(\d t,\d z)$ on $(\Omega_2,\F^2,\p_2)$. In the sequel, the mutually independent processes $\Omega\ni (\omega_1,\omega_2)=\omega\mapsto  B_t(\omega_1)$, $\Omega\ni \omega\mapsto\mathcal{N}(\d t,\d z)(\omega_2)$ are used.

Let $0<\zeta_1<\zeta_2<\cdots<\zeta_n<\cdots$ be the sequence of jumping times of the Poisson point process $(p(t))_{t\geq 0}$ associated with the Poisson random measure $\mathcal{N}(\d t,\d z)$, which depends only on $\omega_2\in \Omega_2$. Since its intensity measure supported in $[0,\kappa_0]$ is a finite measure, $(p(t))_{t\geq 0}$ has finite number of jumps in every finite time interval $[0,T]$, $T>0$, and so
\[\lim_{n\to \infty}\zeta_n(\omega_2)=\infty,\quad \text{$\p_2$-a.e. $\omega_2$}.\]
By virtue of \eqref{b-2}, $\wt \La_t$ can jump only at some $\zeta_n$. We will use the independence between $\mathcal{N}(\d t,\d z)$ and $(B_t)_{t\geq 0}$ to construct the solution piece by piece.

For each $i\in\S$, consider the SDE
\begin{equation}\label{b-5}
\d X_t^{i}(\omega)= b(X_t^{i}(\omega),i)\d t+\sigma(X_t^{i}(\omega),i)\d B_t(\omega_1).
\end{equation}
Under condition (A1), it is well known that SDE \eqref{b-5} admits a unique strong solution for every initial value $X_s=\xi$ with $s>0$ and the random variable  $\xi$ on $\Omega$. So, given $i_0\in\S$,
we can define \[ \wt X_t(\omega_1,\omega_2)=X_t^{i_0}(\omega_1) \  \text{for $t\in [0,\zeta_1(\omega_2)]$ and $\wt \La_t(\omega_2)=i_0$ for $t\in [0,\zeta_1(\omega_2))$},\] where $X_t^{i_0}$ is the unique solution to \eqref{b-5} with initial value $X_0^{i_0}=x_0$.
Furthermore, if $p(\zeta_1)(\omega)\in \Gamma_{i_0j}(\wt X_{\zeta_1})(\omega)$ with $j\neq i_0$, define $\wt\La_{\zeta_1}(\omega)=j$; otherwise, define $\wt \La_{\zeta_1}(\omega)=i_0$.  Hence, $(\wt X_t,\wt \La_t)$ is well defined in the time interval $[0,\zeta_1]$. Besides, notice that under the condition (A1), the solution to SDE \eqref{b-5} is pathwise  unique, which means that $(\wt X_t,\wt \La_t)$ defined above is also pathwise unique in $[0,\zeta_1]$.

To proceed, denoting $\wt \La_{\zeta_1}(\omega_2)=i_1$, we define
\[\wt X_t(\omega_1,\omega_2)=X_t^{i_1}(\omega_1,\omega_2)\ \ \text{for $t\in (\zeta_1(\omega_2),\zeta_2(\omega_2)]$ and $\wt \La_t(\omega_1,\omega_2)=i_1$ for $t\in (\zeta_1(\omega_2),\zeta_2(\omega_2))$},\]
where $(X_t^{i_1})$ is the solution to \eqref{b-5} with initial value $X_{\zeta_1}^{i_1}(\omega_1,\omega_2)=\wt X_{\zeta_1}(\omega_1,\omega_2)$.
If $p(\zeta_2)(\omega)\in \Gamma_{i_1j}(\wt X_{\zeta_2})(\omega)$ with $j\neq i_1$, define $\wt \La_{\zeta_2}(\omega)=j$; otherwise, define $\wt \La_{\zeta_2}(\omega)=i_1$. Consequently, $(\wt X_t,\wt \La_t)$ is uniquely defined in $[\zeta_1,\zeta_2]$. Repeating this procedure, we can define the solution $(\wt X_t,\wt \La_t)$ for all $t\in [0,\infty)$ since $\lim_{n\to \infty}\zeta_n=\infty$. The arbitrariness of the Brownian motion $(B_t)_{t\geq 0}$ and the Poisson random measure $\mathcal N(\d t,\d z)$ implies that $(\wt X_t,\wt \La_t)_{t\geq 0}$ is a strong solution. The wellposedness of SDEs \eqref{b-1} and \eqref{b-2} has been established till now.

Next, we go to show that $(\wt X_t,\wt \La_t)_{t\geq 0}$ also satisfies  \eqref{a-2}.
Take a point $j_0\in\S$, and let  $f(i)=\mathbf1_{\{j_0\}}(i)$. Applying It\^o's formula for jump-diffusion processes, we get
  \begin{equation*}
  \begin{aligned}
    f(\wt \La_{t+\delta})&=f(\wt \La_t)\!+\!\int_{t}^{t+\delta}\!\! \int_{[0,\kappa_0]} f(\wt\La_{s-}+\vartheta(\wt X_s,\wt \La_{s-}, z))-f(\wt \La_{s-})\mathcal{N}(\d s, \d z)\\
    &=f(\wt \La_t)\!+\!  \int_t^{t+\delta}\!\sum_{j\in \S}q_{\wt\La_sj}(\wt X_s) ( f(j)-f(\wt\La_{s})) \d s \\
    &\qquad +\int_{t}^{t+\delta}\! \!\int_{[0,\kappa_0]} \!\! \big\{f(\wt\La_{s-}+\vartheta(\wt X_s,\wt \La_{s-}, z))-f(\wt \La_{s-})\big\}\wt{\mathcal{N}}(\d s, \d z),
    \end{aligned}
  \end{equation*}
  where $\wt{\mathcal{N}}(\d s,\d z)=\mathcal{N}(\d s,\d z)-\d s\d z$ denotes the compensator of the Poisson random measure $\mathcal{N}(\d s,\d z)$. Let $\F_t=\sigma((X_s,\La_s);s\leq t)$.  Taking the conditional expectation $\E[\,\cdot\,|\F_t]$ on both sides of the previous equation and using the Markov property of $(\wt X_t,\wt \La_t)_{t\geq 0}$, we obtain that
  \begin{equation*}
    \E\big[ f(\wt \La_{t+\delta})\big|\wt \La_t=i, \wt X_t=x\big] = f(i) +  \int_t^{t+\delta}\!\E\Big[\sum_{j\in \S}q_{\wt\La_sj}(\wt X_s) f(j)\Big|\wt \La_t=i, \wt X_t=x\Big]\d s.
  \end{equation*}
   This yields from $f(i)=\delta_{\{j_0\}}(i)$ that
  \begin{align*}
    \p\big(\wt \La_{t+\delta}=j_0|\wt \La_{t}=i,\wt X_t=x \big)&=\delta_{\{j_0\}}(i)+\int_t^{t+\delta} \!  \E \Big[q_{\wt \La_s j_0}(\wt X_s)  \Big|\wt \La_t=i, \wt X_t=x\Big]\d s.
  \end{align*}
  Then,
  \begin{equation}\label{b-4}
  \begin{aligned}
    &\frac 1\delta\Big(\p\big(\wt \La_{t+\delta}=j_0|\wt \La_t=i ,\wt X_t=x \big)-\delta_{\{j_0\}}(i)\Big)-q_{ij_0} (x)\\
    &=\frac1\delta \Big(\int_t^{t+\delta}\!\E\Big[\big(  q_{\wt \La_sj_0}(\wt X_s) -q_{ij_0}(x)\big)\Big|\wt \La_t=i,\wt X_t=x\Big]\d s\Big).
  \end{aligned}
  \end{equation}
%
Under the condition that for all $t\geq 0$, $\p(|\wt X_t|=\alpha_k)=0$,  $k=0,1,\ldots,m$, we obtain that
  \begin{align*}
  &\frac1\delta \Big(\int_t^{t+\delta}\!\E\Big[\big(  q_{\wt \La_sj_0}(\wt X_s) -q_{ij_0}(x)\big)\Big|\wt \La_t=i,\wt X_t=x\Big]\d s\Big)\\
    &=\frac 1\delta \int_{t}^{t+\delta}\!\E\Big[\sum_{k=1}^{m+1}\!\Big(q_{\wt \La_sj_0}^{(k)}\mathbf1_{[\alpha_{k-1}, \alpha_k)}(|\wt X_s|)-q_{ij_0}^{(k)}\mathbf1_{[\alpha_{k-1},\alpha_k)}(|x|)\Big)\Big|\wt \La_t=i,\wt X_t=x \Big]\d s\\
    &=\frac1\delta\sum_{k=1}^{m+1}\!\int_{t}^{t+\delta}\!\E\Big[ \big(q_{\wt\La_sj_0}^{(k)}-q_{ij_0}^{(k)}\big)\mathbf1_{(\alpha_{k-1},\alpha_k)}(|\wt X_s|)\\
    &\qquad \qquad \qquad+q_{ij_0}^{(k)}\big(\mathbf1_{(\alpha_{k-1},\alpha_k)}(|\wt X_s|)-\mathbf1_{(\alpha_{k-1},\alpha_k)}(|x|)\big)\Big|\wt\La_t=i,\wt X_t=x\Big]\d s\\
    &\longrightarrow 0,\qquad \qquad \text{as}\ \delta\downarrow 0,
  \end{align*}
  by using the dominated convergence theorem and  the facts \[\lim_{s\downarrow t}\wt \La_s=\wt \La_t,\ \ \text{a.s. and } \lim_{s\downarrow t}\mathbf1_{(\alpha_{k-1},\alpha_k)}(|\wt X_s|)  =\mathbf1_{ (\alpha_{k-1},\alpha_k)}(|x|), \ \ \text{a.s.}.\]
  Hence,    \eqref{a-2} holds and we complete the proof.
\end{proof}

\begin{myrem}
1) This theorem can be easily modified to prove the wellposeness of the hybrid system in $\R$ satisfying \eqref{a-1}, \eqref{a-2} with $(q_{ij}(x))$ in the form \eqref{a-4}.

2) If $a(x,i)=\sigma(x,i)\sigma^\ast(x,i)$ is nondegenerate, so the density of the distribution of $\wt X_t$ exists, and hence $\p(|\wt X_t|=\alpha_k)=0$, $t>0$. Furthermore, see, for example, Skorokhod \cite[Chapter I,Section 2]{Sko} for the existence of transition densities for nondegenerate and degenerate diffusion processes.
\end{myrem}

\section{Approximation limit of hybrid systems with piecewise constant type switching}

In this section, we shall consider the approximation problem of a stochastic hybrid system via a sequence of stochastic hybrid systems with piecewise constant type switching. Precisely, a stochastic hybrid system $(X_t,\La_t)_{t\geq 0}$ is given, which satisfies \eqref{a-1} and \eqref{a-2} with $x\mapsto q_{ij}(x)$ being continuous for every $i,j\in\S$.  Consider an approximation sequence $\{q_{ij}^{(n)}(x)\}_{n\geq 1} $ in the form \eqref{a-3} or \eqref{a-4} when $d=1$ to the continuous function  $q_{ij}(x)$.  Associated with $(q_{ij}^{(n)}(x))_{i,j\in\S}$, there is a sequence of hybrid systems $(X_t^{(n)},\La_t^{(n)})_{t\geq 0}$ due to Theorem \ref{thm-1} under suitable conditions. The purpose of this section is to show the convergence of the distributions of $X_t^{(n)}$ to that of $X_t$ in the $L_1$-Wasserstein distance as $n\to\infty$, and the convergence rate is estimated in terms of the difference between $(q_{ij}^{(n)}(x))_{i,j\in\S}$ and $(q_{ij}(x))_{i,j\in\S}$.



Let $Q(x)=(q_{ij}(x))_{i,j\in\S}$ be a conservative, irreducible $Q$-matrix on $\S$ for every $x\in \R^d$. Assume that $x\mapsto q_{ij}(x)$ is continuous and $\tilde\kappa_0:=\sup_{x\in \R^d}\max_{i\in\S} q_i(x)<\infty$, where $q_i(x)=-q_{ii}(x)$.
Suppose that $Q^{(n)}(x)=(q_{ij}^{(n)}(x))_{i,j\in\S}$ is a sequence of $Q$-matrices of piecewise constant type:
\begin{equation}\label{d-1}
q_{ij}^{(n)}(x)=\sum_{k=1}^{m_n+1}\!q_{ij}^{n,k}\mathbf1_{[\alpha_{k-1}^n,\alpha_k^n)}(|x|)
\end{equation}
where $\Delta_{m_n}^n:=\{0=\alpha_{0}^n<\alpha_1^n< \ldots<\alpha_{m_n}^n<\alpha^n_{m_n+1}=+\infty\}$ is a finite partition of $[0, \infty)$, and $(q_{ij}^{n,k})_{i,j\in\S}$ is a conservative, irreducible $Q$-matrix on $\S$ for every $n\geq 1$, $k=1,\ldots,m_n$.
Assume
\begin{equation}\label{d-2}
\Theta_n:=\! \sup_{x\in \R^d}\!\|Q^{(n)}(x)\!-\!Q(x)\|_{\ell_1}\!=\!\sup_{x\in \R^d}\max_{i \in\S} \sum_{j\neq i} \big|q_{ij}^{(n)}(x)\!-\!q_{ij}(x)\big|\longrightarrow 0,\quad \text{as $n\to \infty$}.
\end{equation}
Let $\kappa_1$ be a constant such that
\begin{equation} \label{d-2.5}
\kappa_1\geq \max\big\{\tilde \kappa_0, |q_{ii}^{n,k}|;\, n\geq 1,1\leq k\leq m_n+1\big\}.
\end{equation}

According to Skorokhod's representation theorem, Theorem \ref{thm-1}, our concerned  stochastic hybrid system $(X_t,\La_t)_{t\geq 0}$ can be expressed as a solution to the SDE
\begin{equation}\label{d-3}
\left\{ \begin{array}{l}
\d X_t=b(X_t,\La_t)\d t+\sigma(X_t,\La_t)\d B_t,\\
\d \La_t=\int_{[0,\kappa_1]} \!\vartheta(X_t,\La_{t-},z)\mathcal{N}(\d t,\d z)
\end{array}
\right.
\end{equation}
with initial value $(X_0,\La_0)=(x_0,i_0)\in \R^d\!\times\!\S$,
where $(B_t)_{t\geq 0}$ is a $d$-dimensional Brownian motion; $\mathcal{N}(\d t,\d z)$ is a Poisson random measure with intensity $\d t\!\times\!\d z$ supported on $[0,\infty)\times [0,\kappa_1]$; $(B_t)$ and $\mathcal{N}(\d t,\d z)$ are mutually independent; $\vartheta(x,i,z)$ is defined as in \eqref{g-2} associated with $(q_{ij}(x))_{i,j\in\S}$ given above.

Corresponding to $Q^{(n)}(x)$ given in \eqref{d-1} satisfying the approximation condition \eqref{d-2}, we consider the approximation processes
$(X_t^{(n)},\La_t^{(n)})_{t\geq 0}$ given as the solutions to SDEs
\begin{equation}\label{d-4}
  \left\{ \begin{array}{l} \d X_t^{(n)}=b(X_t^{(n)},\La_t^{(n)})\d t+\sigma(X_t^{(n)},\La_t^{(n)})\d B_t,\\
  \d \La_t^{(n)}=\int_{[0,\kappa_1]}\vartheta^{(n)}(X_t^{(n)},\La_{t-}^{(n)},z)\mathcal{N}(\d t,\d z)
  \end{array}
  \right.
\end{equation} with initial value $(X_0^{(n)},\La_0^{(n)})=(x_0,i_0)\in \R^d\!\times\!\S$, where $\vartheta^{(n)}(x,i,z)$ is defined as in \eqref{g-2} with intervals $\{\Gamma_{ij}^{(n)}(x)\}$ being associated with $(q_{ij}^{(n)}(x))_{i,j\in\S}$. Theorem \ref{thm-1} ensures the existence of $(X_t^{(n)},\La_t^{(n)})$ under the conditions (A1) and (A2).

Before presenting our main result, let us make some preparations, which play crucial role in the argument. We   use the idea of \cite{SZ21} to estimate the difference between $X_t$ and $X_t^{(n)}$, and we improve the convergence rate based on the current construction of
 $\Gamma_{ij}(x)$ given in \eqref{g-1}.


\begin{mylem}\label{lem-4.1}
For any two Borel sets $A$, $\Gamma$ in $\R$, denote $A\Delta \Gamma=(A\backslash \Gamma)\cup (\Gamma\backslash A)$ and $|A\Delta \Gamma|$ the Lebesgue measure of $A\Delta \Gamma$.   Then, for any $i,j\in\S$,
\[\big|\Gamma_{ij}(x)\Delta   \Gamma_{ij}^{(n)}(y)\big|\leq \max_{i,j\in\S}\big|q_{ij}(x)- q_{ij}^{(n)}(y)\big|,\quad
x,y\in \R^d.
\]
\end{mylem}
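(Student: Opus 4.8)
The plan is to read off the answer from the explicit geometry of the intervals built in \eqref{g-1}. The key observation is that, for a fixed ordered pair $(i,j)$ with $i\neq j$, the \emph{left endpoint} of $\Gamma_{ij}(x)$ is a constant $c_{ij}$ that depends only on $i$, $j$, $N$ and the scaling constant used in the construction, but \emph{not} on $x$, so that $\Gamma_{ij}(x)=[c_{ij},c_{ij}+q_{ij}(x))$; and, crucially, in Section~3 both families $\{\Gamma_{ij}(x)\}$ (attached to \eqref{d-3}) and $\{\Gamma_{ij}^{(n)}(y)\}$ (attached to \eqref{d-4}) are produced by the \emph{same} recipe \eqref{g-1} with the \emph{same} constant $\kappa_1$, hence they share the same left endpoint $c_{ij}$, differing only in their length. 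This aligned‑left‑endpoint property is exactly what the construction in \eqref{g-1} was designed to furnish, and it makes the lemma almost immediate.

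The steps I would carry out are: (i) record that, under the convention $\Gamma_{ij}(x)=\emptyset$ whenever $q_{ij}(x)=0$ (and $\Gamma_{ii}(x)=\emptyset$), one may write $\Gamma_{ij}(x)=[c_{ij},c_{ij}+q_{ij}(x))$ and $\Gamma_{ij}^{(n)}(y)=[c_{ij},c_{ij}+q_{ij}^{(n)}(y))$ uniformly, the empty interval corresponding to length $0$; (ii) use the elementary fact that two half‑open intervals with a common left endpoint satisfy
\[
[c,c+r)\,\Delta\,[c,c+s)=\big[c+(r\wedge s),\,c+(r\vee s)\big),\qquad r,s\geq 0,
\]
whose Lebesgue measure equals $|r-s|$; (iii) apply this with $r=q_{ij}(x)$ and $s=q_{ij}^{(n)}(y)$ to get $\big|\Gamma_{ij}(x)\Delta\Gamma_{ij}^{(n)}(y)\big|=\big|q_{ij}(x)-q_{ij}^{(n)}(y)\big|$; (iv) bound the right‑hand side by $\max_{i,j\in\S}\big|q_{ij}(x)-q_{ij}^{(n)}(y)\big|$, which is the claimed estimate; (v) dispose of the diagonal case $i=j$ trivially, since then both sets are empty.

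The only point that genuinely needs care — and which I would verify explicitly before the one‑line computation — is that the two left endpoints really do coincide: one must check against \eqref{g-1} that the constant offsets there involve only the indices and the fixed scaling constant (not the matrix entries), and that in Section~3 the processes \eqref{d-3} and \eqref{d-4} are set up with one common bound $\kappa_1$ in place of $K_0$, so that the partition of $[0,\kappa_1]$ into the "slots'' $\Gamma_{ij}(\cdot)$ is identical for $Q$ and for every $Q^{(n)}$. Granting this, there is no real obstacle. I would also remark that the inequality is an equality up to replacing $\big|q_{ij}(x)-q_{ij}^{(n)}(y)\big|$ by its maximum over $i,j$, so it is sharp at this level of generality; this is precisely what later lets one push the bound \eqref{d-2} through the coupling estimate.
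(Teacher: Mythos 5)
Your proposal is correct and follows essentially the same route as the paper, which simply asserts that the estimate "follows immediately from the construction method" in \eqref{g-1}; you have merely made explicit the key point — that for each fixed pair $(i,j)$ the two intervals share a left endpoint depending only on $i,j,N$ and the scaling constant, so their symmetric difference is a single interval of length $\big|q_{ij}(x)-q_{ij}^{(n)}(y)\big|$. Your verification that the offsets in \eqref{g-1} do not involve the matrix entries, and that the empty-set convention is consistent with length zero, is exactly the check the paper leaves implicit.
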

\begin{proof}
  This estimate follows immediately from the construction method of $\Gamma_{ij}(x)$ and $\Gamma_{ij}^{(n)}(x)$  as in  \eqref{g-1}.
\end{proof}

\begin{mylem}\label{lem-4.2}
It holds that
\begin{equation}\label{e-5}
\frac 1t \int_0^t\!\p(\La_s\neq \La^{(n)}_s)\d s\leq \int_0^t\!\E\big[\|Q(X_s)-Q^{(n)}\!(X_s^{(n)})\|_{\ell_1}\! \big]\d s,\qquad t>0.
\end{equation}
\end{mylem}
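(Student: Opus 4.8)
The plan is to run a synchronous coupling argument on the joint process $(X_t,\La_t,X_t^{(n)},\La_t^{(n)})$, which is Markov precisely because the pairs in \eqref{d-3} and \eqref{d-4} are driven by the \emph{same} Brownian motion $(B_t)$ and, crucially, by the \emph{same} Poisson random measure $\mathcal N(\d t,\d z)$. Set $D_t:=\mathbf 1_{\{\La_t\neq\La_t^{(n)}\}}$; since $\La_0=\La_0^{(n)}=i_0$ we have $D_0=0$, and $D_t$ can change only at an atom of $\mathcal N$, at which moment \emph{both} components are updated at the \emph{same} location $z$, via $\La_s=\La_{s-}+\vartheta(X_s,\La_{s-},z)$ and $\La_s^{(n)}=\La_{s-}^{(n)}+\vartheta^{(n)}(X_s^{(n)},\La_{s-}^{(n)},z)$. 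Applying It\^o's formula for pure-jump processes to $D_t$ and taking expectations (the compensated integral is a martingale, the integrand being bounded and predictable) gives
\[\E[D_t]=\E\!\int_0^t\!\!\int_{[0,\kappa_1]}\!\Big[\mathbf 1_{\{\La_{s-}+\vartheta(X_s,\La_{s-},z)\,\neq\,\La_{s-}^{(n)}+\vartheta^{(n)}(X_s^{(n)},\La_{s-}^{(n)},z)\}}-\mathbf 1_{\{\La_{s-}\neq\La_{s-}^{(n)}\}}\Big]\,\d z\,\d s.\]

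The core estimate is the pointwise bound, valid for every $s$ and $\omega$,
\[\int_{[0,\kappa_1]}\!\Big[\mathbf 1_{\{\La_{s-}+\vartheta(X_s,\La_{s-},z)\,\neq\,\La_{s-}^{(n)}+\vartheta^{(n)}(X_s^{(n)},\La_{s-}^{(n)},z)\}}-\mathbf 1_{\{\La_{s-}\neq\La_{s-}^{(n)}\}}\Big]\,\d z\;\le\;\big\|Q(X_s)-Q^{(n)}(X_s^{(n)})\big\|_{\ell_1}.\]
I would split into two cases. If $\La_{s-}\neq\La_{s-}^{(n)}$, the bracket is an indicator minus $1$, hence $\le 0$, and the bound is trivial. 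If $\La_{s-}=\La_{s-}^{(n)}=:i$, the bracket reduces to $\mathbf 1_{\{\vartheta(X_s,i,z)\neq\vartheta^{(n)}(X_s^{(n)},i,z)\}}$, so the left-hand side is the Lebesgue measure of $\{z:\vartheta(X_s,i,z)\neq\vartheta^{(n)}(X_s^{(n)},i,z)\}$. By the construction of Section~2 the intervals $\Gamma_{ij}(\cdot)$ and $\Gamma_{ij}^{(n)}(\cdot)$ for distinct targets $j$ occupy disjoint ``slots'' of $[0,\kappa_1]$ that are the same for both families and independent of the spatial argument; hence this set equals the \emph{disjoint} union $\bigcup_{j\neq i}\big(\Gamma_{ij}(X_s)\Delta\Gamma_{ij}^{(n)}(X_s^{(n)})\big)$, and since $\Gamma_{ij}(x)$ and $\Gamma_{ij}^{(n)}(y)$ share the same left endpoint one has the exact identity $\big|\Gamma_{ij}(x)\Delta\Gamma_{ij}^{(n)}(y)\big|=|q_{ij}(x)-q_{ij}^{(n)}(y)|$ (the sharp form of the bound behind Lemma~\ref{lem-4.1}). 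Summing over $j\neq i$ gives $\sum_{j\neq i}|q_{ij}(X_s)-q_{ij}^{(n)}(X_s^{(n)})|\le\|Q(X_s)-Q^{(n)}(X_s^{(n)})\|_{\ell_1}$, as claimed.

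Combining the two displays yields $\E[D_t]\le\E\int_0^t\|Q(X_s)-Q^{(n)}(X_s^{(n)})\|_{\ell_1}\,\d s$ for every $t>0$. Since the right-hand side is nondecreasing in $t$, for every $s\le t$ we get $\p(\La_s\neq\La_s^{(n)})=\E[D_s]\le\E\int_0^s\|Q(X_u)-Q^{(n)}(X_u^{(n)})\|_{\ell_1}\,\d u\le\int_0^t\E\big[\|Q(X_u)-Q^{(n)}(X_u^{(n)})\|_{\ell_1}\big]\,\d u$ by Tonelli's theorem; averaging this inequality over $s\in[0,t]$ gives \eqref{e-5}. The step I expect to be the main obstacle is the interval bookkeeping in the second paragraph: one must verify that in the Skorokhod-type construction the slot assigned to the transition $i\to j$ has a left endpoint independent of $x$ and common to $\Gamma_{ij}$ and $\Gamma_{ij}^{(n)}$ — which forces both families to be built with one common spacing constant, legitimate by \eqref{d-2.5} — and that the maximal admissible interval length never exceeds the slot width, so that the slots for distinct $j$ stay disjoint even at the extreme rate values. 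Once this is granted, the symmetric-difference identity and the reduction to a single row of $\|Q(X_s)-Q^{(n)}(X_s^{(n)})\|_{\ell_1}$ are immediate, and the remaining ingredients — It\^o's formula for jump processes, discarding the martingale term, Tonelli, and the final averaging — are routine.
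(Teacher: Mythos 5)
Your proof is correct, and it takes a genuinely different route from the paper's. The paper proves \eqref{e-5} by a time-discretization: it conditions on the number of Poisson jumps in each window of length $\delta$, bounds the probability of a first disagreement on a single window by the measure of $\bigcup_{j\neq i_0}\bigl(\Gamma_{i_0j}(X_s)\Delta\Gamma^{(n)}_{i_0j}(X_s^{(n)})\bigr)$ exactly as you do, then iterates over the windows $[k\delta,(k+1)\delta]$, accumulating error terms $k\kappa_1^2\delta^2$ that are removed by letting $\delta\downarrow 0$. You instead apply It\^o's formula for jump processes directly to $D_t=\mathbf 1_{\{\La_t\neq\La_t^{(n)}\}}$, kill the compensated integral (legitimate, since the integrand is bounded and the intensity on $[0,\kappa_1]$ is finite), and reduce everything to the same pointwise slot/symmetric-difference computation. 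Your key structural observations are exactly the ones the paper relies on in Lemma \ref{lem-4.1} and in \eqref{e-9}--\eqref{e-10}: the two families of intervals are built with a common spacing constant dominated via \eqref{d-2.5}, so the slots for distinct targets $j$ are disjoint and $\Gamma_{ij}(x)$, $\Gamma^{(n)}_{ij}(y)$ share a left endpoint, whence $|\Gamma_{ij}(x)\Delta\Gamma^{(n)}_{ij}(y)|=|q_{ij}(x)-q^{(n)}_{ij}(y)|$ and the row sum is bounded by $\|Q(X_s)-Q^{(n)}(X_s^{(n)})\|_{\ell_1}$. What your approach buys: it is shorter, avoids the bookkeeping of the recursive estimate and the $\delta\downarrow 0$ limit, and in fact delivers the stronger pointwise bound $\p(\La_t\neq\La_t^{(n)})\leq\int_0^t\E\bigl[\|Q(X_s)-Q^{(n)}(X_s^{(n)})\|_{\ell_1}\bigr]\d s$, from which \eqref{e-5} follows by averaging; the paper's version with the extra factor $t$ is all that is needed downstream in Theorem \ref{thm-4.1}, but your sharper form would work there too.
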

\begin{proof}
  Let $(p(t))_{t\ge 0}$ be the Poisson point process associated with $\mathcal{N}(\d t,\d z)$. Set $\zeta_1<\zeta_2<\cdots<\zeta_n<\cdots$ be the sequence of jumping time of $(p(t))_{t\geq 0}$. Put $\zeta_0=0$, $\tau_k=\zeta_{k}-\zeta_{k-1}$ for $k\geq 1$, and
\[N(t)=\#\{k\geq 1;\ \zeta_k\leq t\}, \ \text{the number of jumps of $(p(t))_{t\geq 0}$ before time $t$.}\]
Denote by $\Delta p(t)=p(t)-p(t-)$. So, $\Delta p(\zeta_k)>0$ for $k\geq 1$.
According to \cite[Chapter 1]{Ike},  $\Delta p(\zeta_k)$ is independent of $\zeta_m$ for $k,m\in \N$, and
\begin{align*}
  &\p(\Delta p(\zeta_k)\in \d x)=\frac1{\kappa_1} \d x,\quad \p(\tau_k>t)=\e^{-\kappa_1 t}, \\
  &\p(N(t)=k)=\frac{(\kappa_1 t)^k}{k!}\e^{-\kappa_1 t},\quad t>0, \, k\geq 1,
\end{align*} where $\kappa_1$ is the same as in \eqref{d-2.5}.

Letting $\delta\in(0,1)$, $t\in (0,\delta)$, due to \eqref{d-3}, \eqref{d-4}, we have
\begin{align*}
  \p(\La_t\neq \La^{(n)}_t)&= \p(\La_t\neq \La^{(n)}_t, N(t)\geq 1)\\
  &=\p(\La_t\neq \La^{(n)}_t, N(t)=1)+\p(\La_t\neq \La^{(n)}_t, N(t)\geq 2)\\
  &\leq \p(\La_t\neq \La^{(n)}_t,N(t)=1)+\kappa_1^2\delta^2.
\end{align*}
Furthermore,
\begin{equation}\label{e-9}
\begin{aligned}
  \p(\La_t\neq \La^{(n)}_t,N(t)=1)&=\int_0^t \p(\La_s\neq \La^{(n)}_s,\zeta_1\in \d s,\zeta_2>t)\\
  &=\int_0^t\p\Big(\Delta p(s)\in \bigcup_{j\neq i_0}\big\{\Gamma_{i_0j}(X_s)\Delta \Gamma_{i_0j}^{(n)}(X^{(n)}_s)\big\}, \zeta_1\in \d s,\zeta_2>t\Big)\\
  &=\int_0^t\p\Big(\Delta p(s)\in \!\bigcup_{j\neq i_0}\big\{\Gamma_{i_0j}(X_s^{i_0})\Delta \Gamma_{i_0j}^{(n)}(X_s^{(n),i_0})\big\}, \zeta_1\!\in\!\d s,\zeta_2>t\Big),
  \end{aligned}
  \end{equation}
  where $(X_t^{i_0})$ and $(X_t^{(n),i_0})$ denote respectively the solution to SDEs \eqref{d-3}, \eqref{d-4} by replacing $\La_t$ and $\La_t^{(n)}$ in the coefficients $b$ and $\sigma$ with $i_0$. By Lemma \ref{lem-4.1}, it follows from the independence between $(p(t))$ and $(B_t)$ that
  \begin{equation}\label{e-10}
  \begin{aligned}
  \p(\La_t\neq \La^{(n)}_t,N(t)=1)&\leq \frac{1}{\kappa_1}\int_0^t  \E\big[\|Q(X_s)-Q^{(n)}(X_s^{(n)})\|_{\ell_1} \big]\p(\zeta_1\in \d s,\zeta_2>t)\\
  &\leq   \int_0^t\!\E\big[\|Q(X_s)- Q^{(n)}(X_s^{(n)})\|_{\ell_1}\big] \d s.
\end{aligned}
\end{equation}
Hence,
\begin{equation}\label{e-6}
\p(\La_t\neq \La^{(n)}_t)\leq \kappa_1^2\delta^2+\int_0^t\!\E\big[\|Q(X_s)- Q^{(n)}(X_s^{(n)})\|_{\ell_1}\big] \d s,\quad t\in (0,\delta].
\end{equation}
Next, let us consider the case $\p(\La_{2\delta}\neq \La_{2\delta}^{(n)})$.
\begin{align*}
  \p(\La_{2\delta}\neq \La^{(n)}_{2\delta})&=\p(\La_{2\delta}\neq \La^{(n)}_{2\delta},\La_\delta=\La^{(n)}_{\delta}) +\p (\La_{2\delta}\neq \La^{(n)}_{2\delta},\La_\delta\neq \La^{(n)}_\delta)\\
  &\leq \p(\La_{2\delta}\neq \La^{(n)}_{2\delta}\big|\La_{\delta}=\La^{(n)}_\delta)+\p (\La_\delta\neq \La^{(n)}_\delta)\\
  &\leq \p(\La_{2\delta}\neq \La^{(n)}_{2\delta}, N(2\delta)\!-\!N(\delta)\geq 1\big| \La_\delta=\La^{(n)}_\delta)+\p(\La_\delta\neq \La^{(n)}_\delta).
\end{align*}
Similar to the deduction of \eqref{e-6}, using still the independent increment of the processes $(p(t))$ and $(B_t)$, we obtain that
\begin{align*}
  \p(\La_{2\delta}\neq \! \La^{(n)}_{2\delta},N(2\delta)\!-\!N(\delta)\! \geq 1\big|\La_\delta\! =\!\La^{(n)}_\delta)\leq \kappa_1^2\delta^2\! + \! \int_\delta^{2\delta}\! \E\big[\|Q(X_s)\!-\!Q^{(n)}(X_s^{(n)})\|_{\ell_1}\big] \d s.
\end{align*}
Combining this with \eqref{e-6}, we get
\begin{equation}\label{e-7}
\p(\La_{2\delta}\neq\! \La^{(n)}_{2\delta})\leq 2\kappa_1^2\delta^2\!+\! \int_0^{2\delta}\!\E \big[\|Q(X_s)-Q^{(n)}(X_s^{(n)})\|_{\ell_1} \big] \d s.
\end{equation}
Deducing recursively, it holds
\begin{equation}\label{e-8}
\p(\La_{k\delta}\neq \La^{(n)}_{k\delta})\leq k\kappa_1^2\delta^2+\!\int_0^{k\delta}\!\E \big[\|Q(X_s)-Q^{(n)}(X_s^{(n)})\|_{\ell_1}\big] \d s,\quad \ k\geq 1.
\end{equation}
Given   $t>0$, let $K=[t/\delta]$, $t_k=k\delta$ for $k=0,1,\ldots, K$, and $t_{K+1}=t$. Then
\begin{align*}
  \int_0^t\!\p(\La_s\!\neq \!\La^{(n)}_s)\d s&=\sum_{k=0}^K\int_{t_k}^{t_{k+1}}\!\! \big(\p (\La_s\neq \La^{(n)}_s, \La_{t_k}=\La^{(n)}_{t_k})  \!+\! \p(\La_s\neq \La^{(n)}_s, \La_{t_k}\neq \La^{(n)}_{t_k})\big)\d s \\
  &\leq\sum_{k=0}^{K}\!\int_{t_k}^{t_{k+1}}\!\! \p(\La_s\neq \La^{(n)}_s|\La_{t_k}=\La^{(n)}_{t_k})\d s+\sum_{k=0}^K\delta\,\p(\La_{t_k}\neq \La^{(n)}_{t_k})\\
  &\leq \sum_{k=0}^K\delta\,\p(\La_{t_k}\!\neq \! \La^{(n)}_{t_k})\!+\! \sum_{k=0}^K\!\int_{t_k}^{t_{k+1}}\!\! \! \Big(\!\int_0^\delta\! \! \E\big[\|Q(X_{t_k+r})\!-\!Q^{(n)}(  X^{(n)}_{t_k+r})\|_{\ell_1}\big]\d r\!+\!\kappa_1^2\delta^2\Big)\d s\\
  &\leq \sum_{k=0}^K\!\delta\Big(k\kappa_1^2\delta^2\!+\! \! \int_0^{t_k}\!\! \E\big[\|Q(X_s)-Q^{(n)}(X_s^{(n)})\|_{\ell_1} \!\big] \d s \Big)\\ &\qquad + \sum_{k=0}^K\!\int_{t_k}^{t_{k+1}}\!\! \! \int_0^\delta\! \! \E\big[\|Q(X_{t_k+r})\!-\!Q^{(n)}(  X^{(n)}_{t_k+r})\|_{\ell_1}\big]\d r\d s +\kappa_1^2\delta^2 t.
\end{align*}
Letting $\delta\downarrow 0$, this yields that
\begin{equation*}
  \int_0^t\!\p(\La_s\neq \La^{(n)}_s)\d s\leq t\int_0^{t}\! \E\big[\|Q(X_s)-Q^{(n)}(X_s^{(n)})\|_{\ell_1} \!\big] \d s,
\end{equation*} which is the desired conclusion.
\end{proof}

For any two probability measures $\mu$ and $\nu$ on $\R^d$,  the $L_1$-Wasserstein distance between $\mu$ and $\nu$ is defined by
    \[\W_1(\mu,\nu)=\inf_{\pi\in \C(\mu,\nu)}\Big\{\int_{\R^d\!\times \R^d}\!\! |x-y|\pi(\d x,\d y)\Big\},\]
    where $\C(\mu,\nu)$ stands for the set of all couplings of $\mu$ and $\nu$.

\begin{mythm}\label{thm-4.1}
Consider the hybrid systems $(X_t,\La_t)$ and $(X_t^{(n)},\La_t^{(n)})$ satisfying \eqref{d-3} and \eqref{d-4} respectively.
 Assume  $\sigma(x,i)=\!\sigma\!\in\!\R^{d\times d}$ with determinant $\mathrm{det}(\sigma)>0$, and   $b(x,i)=\hat{b}(x,i)+Z(x)$ satisfies
  \begin{equation}\label{d-5}
  |\hat{b}(x,i)-\hat{b}(y,i)|+|Z(x)-Z(y)|\leq K_2|x-y|,\qquad x,y\in \R^d,\,i\in\S,
  \end{equation}
  and
  \begin{equation}\label{d-6}
  \max_{i\in\S}\sup_{x\in \R^d} |\hat{b}(x,i)|\leq K_2 \quad \text{for some $K_2>0$.}
  \end{equation}  Assume that there exists $K_3>0$ such that
  \begin{equation}\label{d-7}
  |q_{ij}(x)-q_{ij}(y)|\leq K_3|x-y|,\quad x,y\in \R^d,\ i,j\in\S, \quad \text{and} \ \ \sup_{x\in\R^d} \max_{i\in \S} q_i(x)<\infty.
  \end{equation} Suppose \eqref{d-2} holds. Denote by $\mu_t^{n}$ the distribution of $X_t^{(n)}$ and $\mu_t$ the distribution of $X_t$.
    Then
    \begin{equation}\label{d-9}
    \sup_{t\in [0,T]}\W_1(\mu_t^n,\mu_t)\leq 2T \e^{(K_2+2(N\!-\!1)K_3)T} \Theta_n,\qquad T>0,
    \end{equation}
    where $\Theta_n$ is defined as in \eqref{d-2}. Moreover,
   $\lim_{n\to \infty} \sup_{t\in [0,T]}\W_1(\mu_t^n,\mu_t)=0$.
\end{mythm}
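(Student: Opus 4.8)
The plan is to run the synchronous coupling that is already built into \eqref{d-3}--\eqref{d-4}: both systems use the same Brownian motion $(B_t)$, the same Poisson random measure $\mathcal N(\d t,\d z)$, and the same initial datum $(x_0,i_0)$, so $(X_t,X_t^{(n)})$ is a coupling of $\mu_t$ and $\mu_t^n$ and hence $\W_1(\mu_t^n,\mu_t)\le\E|X_t-X_t^{(n)}|$. I would then control simultaneously the continuous discrepancy $g(t):=\E|X_t-X_t^{(n)}|$ and the discrete discrepancy $h(t):=\p(\La_t\ne\La_t^{(n)})$ and close a single Gronwall inequality for the combination $\phi(t):=g(t)+2h(t)$; since $\W_1(\mu_t^n,\mu_t)\le g(t)\le\phi(t)$, a bound on $\phi$ suffices.

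For $g$: because $\sigma(x,i)\equiv\sigma$ is constant, the martingale parts in \eqref{d-3} and \eqref{d-4} cancel, so $X_t-X_t^{(n)}=\int_0^t\big(b(X_s,\La_s)-b(X_s^{(n)},\La_s^{(n)})\big)\d s$. Writing $b=\hat b+Z$ and splitting the integrand as $\big(\hat b(X_s,\La_s)-\hat b(X_s^{(n)},\La_s)+Z(X_s)-Z(X_s^{(n)})\big)+\big(\hat b(X_s^{(n)},\La_s)-\hat b(X_s^{(n)},\La_s^{(n)})\big)$, the first group is $\le K_2|X_s-X_s^{(n)}|$ by \eqref{d-5} (here it is essential that $Z$ carries no discrete index), while the second — which vanishes on $\{\La_s=\La_s^{(n)}\}$ — is $\le 2K_2\mathbf 1_{\{\La_s\ne\La_s^{(n)}\}}$ by the boundedness \eqref{d-6}. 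Taking moduli, integrating and taking expectations gives $g(t)\le K_2\int_0^t g(s)\,\d s+2K_2\int_0^t h(s)\,\d s$.

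For $h$: I would invoke Lemma \ref{lem-4.2}, or rather the pointwise bound $h(t)\le\int_0^t\E\,\|Q(X_s)-Q^{(n)}(X_s^{(n)})\|_{\ell_1}\,\d s$ that its proof yields on letting $\delta\downarrow0$ in \eqref{e-8}. Together with $\|Q(X_s)-Q^{(n)}(X_s^{(n)})\|_{\ell_1}\le(N-1)K_3|X_s-X_s^{(n)}|+\Theta_n$, which follows from the Lipschitz bound \eqref{d-7} and the definition \eqref{d-2} of $\Theta_n$, this gives $h(t)\le(N-1)K_3\int_0^t g(s)\,\d s+\Theta_n t$. Adding the $g$-estimate to twice this and using $2K_2h(s)\le 2\big(K_2+2(N-1)K_3\big)h(s)$, one obtains $\phi(t)\le\big(K_2+2(N-1)K_3\big)\int_0^t\phi(s)\,\d s+2\Theta_n t$, so Gronwall's inequality (and $\e^{ct}-1\le ct\,\e^{ct}$) gives $\phi(t)\le\frac{2\Theta_n}{K_2+2(N-1)K_3}\big(\e^{(K_2+2(N-1)K_3)t}-1\big)\le2\Theta_n\,t\,\e^{(K_2+2(N-1)K_3)t}$. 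This is \eqref{d-9} on $[0,T]$, and since $\Theta_n\to0$ it forces $\sup_{t\in[0,T]}\W_1(\mu_t^n,\mu_t)\to0$.

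The hard part will be the $h$-estimate together with the bookkeeping that produces the clean constants. One has to exploit the special feature of the Skorokhod construction \eqref{g-1} — that $\Gamma_{ij}(x)$ and $\Gamma_{ij}^{(n)}(y)$ share the same left endpoint and differ only in length, which is recorded in Lemma \ref{lem-4.1} — so that on $\{\La_s=\La_s^{(n)}=i\}$ the rate at which the discrepancy $\{\La_t\ne\La_t^{(n)}\}$ is created equals $\sum_{j\ne i}|\Gamma_{ij}(X_s)\Delta\Gamma_{ij}^{(n)}(X_s^{(n)})|=\sum_{j\ne i}|q_{ij}(X_s)-q_{ij}^{(n)}(X_s^{(n)})|$ with no loss, and one has to argue that atoms of $\mathcal N$ arriving while $\La_{s-}\ne\La_{s-}^{(n)}$ can only leave the discrepancy intact or remove it, never worsen it. Finally, since neither $t\mapsto|X_t-X_t^{(n)}|$ nor $t\mapsto\p(\La_t\ne\La_t^{(n)})$ is smooth, the whole argument should be carried out through integral inequalities, and the weight $2$ in $\phi=g+2h$ must be chosen precisely so that the cross term $2K_2h$ is absorbed into $(K_2+2(N-1)K_3)\phi$, which is exactly what yields the exponent $K_2+2(N-1)K_3$ in \eqref{d-9}.
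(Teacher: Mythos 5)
Your proposal is correct and uses the same key ingredients as the paper: the synchronous coupling built into \eqref{d-3}--\eqref{d-4} (shared $B$ and $\mathcal N$), cancellation of the martingale parts because $\sigma$ is constant, the splitting $b=\hat b+Z$ with \eqref{d-5}--\eqref{d-6}, Lemmas \ref{lem-4.1}--\ref{lem-4.2} to control $\p(\La_s\neq\La_s^{(n)})$, the decomposition $\|Q(X_s)-Q^{(n)}(X_s^{(n)})\|_{\ell_1}\le (N-1)K_3|X_s-X_s^{(n)}|+\Theta_n$, and Gronwall. The only genuine divergence is in the Gronwall bookkeeping, and it is to your advantage. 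The paper substitutes the \emph{integrated} form of Lemma \ref{lem-4.2}, namely $\int_0^T\p(\La_s\neq\La_s^{(n)})\,\d s\le T\int_0^T\E\|Q(X_s)-Q^{(n)}(X_s^{(n)})\|_{\ell_1}\,\d s$, directly into the estimate for $\E\sup_{t\le T}|X_t-X_t^{(n)}|$ and applies Gronwall to that single quantity; this produces the bound $2T^2\e^{(K_2+2(N-1)TK_3)T}\Theta_n$, in which $T$ enters the exponent, and which does not literally match the displayed constant in \eqref{d-9}. You instead extract the pointwise bound $\p(\La_t\neq\La_t^{(n)})\le\int_0^t\E\|Q(X_s)-Q^{(n)}(X_s^{(n)})\|_{\ell_1}\,\d s$ from \eqref{e-8} (letting $\delta\downarrow 0$ along $\delta=t/k$ kills the $k\kappa_1^2\delta^2$ term), and close a coupled Gronwall inequality for $\phi=g+2h$; your weight $2$ is chosen correctly, the comparison function $\frac{2\Theta_n}{c}(\e^{ct}-1)\le 2\Theta_n t\,\e^{ct}$ with $c=K_2+2(N-1)K_3$ checks out, and this yields \eqref{d-9} exactly as stated. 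So your route is a legitimate and in fact cleaner variant of the paper's argument; the one thing to be explicit about in a final write-up is the passage from \eqref{e-8} to the pointwise bound on $\p(\La_t\neq\La_t^{(n)})$ at an arbitrary fixed $t$, since Lemma \ref{lem-4.2} as stated only records the time-averaged version.
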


\begin{proof}
  Under the conditions \eqref{d-5}-\eqref{d-7}, for any initial value $(x,i)\in \R^d\times \S$ the solution $(X_t,\La_t)$ to
  SDE \eqref{d-3} exists uniquely in the pathwise sense. The same assertion holds for   $(X_t^{(n)},\La_t^{(n)})$ as a  solution to SDE \eqref{d-4}.
  By virtue of \eqref{d-3}, \eqref{d-4}, \eqref{d-5}, \eqref{d-6}, for $T>0$,
  \begin{align*}
    &\E\big[\sup_{0\leq t\leq T} |X_t^{(n)}-X_t|\big]\\
    &\leq \E\Big[\int_0^T\! |b(X_s^{(n)},\La_s^{(n)})-b(X_s,\La_s)|\d s\Big]\\
    &\leq \E\Big[\int_0^T\!\! |\hat{b}(X_s^{(n)},\La_s^{(n)}) \!-\!\hat{b}(X_s,\La_s^{(n)})| \! +\!|Z(X_s^{(n)}) \! -\!Z(X_s)|\!+\! |\hat{b}(X_s,\La_s^{(n)})\!-\! \hat{b}(X_s,\La_s)|\d s\Big]\\
    &\leq K_2\int_0^T\E\Big[|X_s^{(n)}-X_s|+2\mathbf1_{\{ \La_s^{(n)}\neq \La_s\}}\Big]\d s.
  \end{align*}
  According to  Lemma \ref{lem-4.2},
  \begin{equation}\label{d-8}
  \int_0^T\p(\La_s^{(n)}\neq \La_s)\d s\leq T\int_0^T\E\big[\|Q(X_s)\!-\! Q^{(n)}(X_s^{(n)})\|_{\ell_1}\big]\d s.
  \end{equation}
  Combining this with \eqref{d-7}, we get
  \begin{equation*}
  \begin{split}
  \int_0^T\p(\La_s^{(n)}\neq \La_s)\d s&\leq \!T\int_0^T\!\!\E\Big[\|Q(X_s)\!-\! Q(X_s^{(n)})\|_{\ell_1}+\|Q(X_s^{(n)})- Q^{(n)}(X_s^{(n)})\|_{\ell_1} \Big]\d s\\
  &\leq T\int_0^T\! \Big((N-1)K_3\E\big[|X_s^{(n)}-X_s|\big]\!+\! \Theta_n\Big)\d s.
  \end{split}
  \end{equation*}
  Hence,
  \begin{equation*}
  \begin{split}
    \E\Big[\sup_{0\leq t\leq T}|X_t^{(n)}\!-\!X_t|\Big] &\leq 2T^2\Theta_n\!+\! \big(K_2\!+\!2(N\!-\!1)TK_3\big) \int_0^T\E\Big[\sup_{0\leq s\leq t}|X_s^{(n)}\!-\!X_s|\Big] \d t,
  \end{split}
  \end{equation*}
  which implies that
  \[\E\Big[\sup_{0\leq t\leq T}|X_t^{(n)}-X_t|\Big]\leq 2T^2 \e^{(K_2+2(N\!-\!1)TK_3)T}\Theta_n,\]
  by Gronwall's inequality. The desired conclusion \eqref{d-9} follows immediately from the fact $\W_1(\mu_t^n,\mu_t)\leq \E|X_t^{(n)}-X_t|$, and the proof is complete.
\end{proof}

\begin{myrem}
  In Theorem \ref{thm-4.1}, the $L_1$-Wasserstein distance cannot be replaced by other $L_p$-Wasserstein distance with $p>1$, which is due to the application of Gronwall's inequality and the estimate on the quantity $\int_0^t\p(\La_s\neq \La_s^{(n)})\d s$ obtained in Lemma \ref{lem-4.2}.
\end{myrem}



%

\section{Ergodicity and stability  of hybrid systems with piecewise constant type switching}

The stability and ergodicity of regime-switching diffusion processes have been widely studied and applied in the literatures. Providing easily verifiable conditions on the stability and ergodicity of state-dependent regime-switching processes is a quite challenging task because of the intensive interaction between the continuous component $X_t$ and the discrete component $\La_t$.
However, as a special kind of   stochastic hybrid systems with state-dependent switching, we shall show that the ergodicity and stability of stochastic hybrid systems  with  piecewise constant type switching satisfying \eqref{a-3} or \eqref{a-4} when $d=1$ can be easily justified, whose justification criteria are very similar to those for the stochatic hybrid systems with state-independent switching.   Invoking the approximation limit studied in Section 3, we can widely apply the hybrid systems with piecewise constant type switching as feasible mathematical models in realistic practice.

\subsection{Criteria on  stability and instability in probability}
Recall the hybrid system $(X_t,\La_t)$ satisfying \eqref{a-1}, \eqref{a-2} and \eqref{a-3}, whose infinitesimal generator $\mathscr{A}$ is given by
\begin{align}\label{f-0}
  \mathscr{A} f(x,i)&=\mathcal{L}^{(i)} f(\cdot,i)(x)+Q(x)f(x,\cdot)(i)\\ \notag
  &:=\sum_{k=1}^d b_k(x,i)\frac{\partial f}{\partial x_k}(x,i)\!+\!\frac 12 \sum_{k,l=1}^d\!a_{kl}(x,i)\frac{\partial^2 f}{\partial x_k\partial x_l}(x,i)\!+ \! \sum_{j\neq i} q_{ij}(x)\big(f(x,j)\!-\!f(x,i)\big)
\end{align}
for any $f\in C_b^2(\R^d\!\times\!\S)$. Here $a(x,i)=\sigma(x,i)\sigma^\ast(x,i)$, and $\mathcal{L}^{(i)}$ is the generator associated with the diffusion process $(X_t^{(i)})$ in the fixed environment $i\in\S$, which satisfies the following SDE:
\[\d X_t^{(i)}=b(X_t^{(i)},i)\d t+\sigma(X_t^{(i)},i)\d B_t.\]

In this section, we are interested in   the stability of the equilibrium point $x=0$. Adopting Khasminskii's notations, the equilibrium point $x=0$ is said to be \emph{stable in probability} if for any $\veps>0$, $\lim_{x\to 0}\p\big(\sup_{t\geq 0} |X_t^{x,i}|>\veps\big)=0$ for every $i\in \S$; is said to be \emph{unstable in probability} if it is not stable in probability. $x=0$ is said to be \emph{asymptotically stable in probability} if it is stable in probability and further
$\lim_{x\to 0}\p\big(\lim_{t\to\infty} X_t^{x,i}=0\big)=1$ for every $i\in \S$.

Assume the coefficients satisfy the following conditions in this subsection.
\begin{itemize}
  \item[$(\mathrm{H1})$] $b(0,i)=0$, $\sigma(0,i)=0$ for every $i\in\S$. Moreover, for any sufficiently small $r_0>\veps>0$, there exist $l\in \{1,2,\ldots,d\}$ and $c(\veps)>0$ such that $a_{ll}(x,i)>c(\veps)$ for all $(x,i)\in \{x;\,\veps<|x|<r_0\}\!\times\!\S$.
\end{itemize}

As a Markovian process, the Foster-Lyapunov condition to justify the stability of $x=0$ still works for the regime-switching process $(X_t,\La_t)$; cf. e.g. \cite[Chapter 7]{YZ}. Moreover, much effort has been done in \cite{SW18,SX,YZ} to provide more explicit conditions to guarantee the existence of desired Lyapunov functions. For the convenience of readers, we summarize some known results as follows.
\begin{mylem}\label{lem-3}
Assume $\mathrm{(H1)}$ and $\mathrm{(A1)}$, $\mathrm{(A2)}$ hold. Then the following assertions hold.
\begin{itemize}
  \item[$\mathrm{(i)}$] If $d=1$, then $\p(X_t^{x,i}\neq 0,\ \forall\,t>0)=1$ for any $x\neq 0$, $i\in \S$. Here we denote $(X_t^{x,i},\La_t^{x,i})$ the solution to \eqref{a-1}, \eqref{a-2} with initial value $(X_0^{x,i},\La_0^{x,i})=(x,i)$.
  \item[$\mathrm{(ii)}$] If there exists a neighborhood $D$ of $0$ in $\R^d$ and a nonnegative function $V:D\!\times\!\S\to [0,\infty)$ such that $V(\cdot,i)$ is continuous in $D$ and $\inf_{i\in\S} V(x,i)$ vanishes only at $x=0$; $V\in C^2(D\backslash\{0\}\times\S)$; $\mathscr{A}V(x,i)\leq 0$ for all $x\in D\backslash\{0\}$, $i\in\S$. Then the equilibrium point $x=0$ is asymptotically stable in probability.
  \item[$\mathrm{(iii)}$] If there exists a neighborhood $D$ of $0$ in $\R^d$ and a function $V:D\times\S\to [0,\infty)$ such that $V\in C^2(D\backslash\{0\}\times \S)$, $\mathscr{A}V(x,i)\leq 0$ for any $x\in D\backslash\{0\}$, $i\in\S$, and $\lim_{x\to 0} V(x,i)=\infty$. Then the equilibrium point $x=0$ is unstable in probability.
\end{itemize}
\end{mylem}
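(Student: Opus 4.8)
\emph{Proof strategy.} The plan is to reduce all three assertions to their well-known counterparts for \emph{Markovian} (i.e.\ state-independent) regime-switching diffusions. The decisive structural remark is that, since $\alpha_0=0$ in \eqref{a-3}, the switching rates are constant on the ball $D_0:=\{x\in\R^d:|x|<\alpha_1\}$ around the equilibrium: $q_{ij}(x)\equiv q_{ij}^{(1)}$ there for all $i,j\in\S$. Hence the construction in Theorem \ref{thm-1}, up to the first exit from $D_0$, produces precisely a Markovian regime-switching diffusion with generator $\mathscr{A}$ of \eqref{f-0} (restricted to $D_0$), there being no discontinuity of $q_{ij}(\cdot)$ inside the open ball $D_0$. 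Now stability and instability in probability at $0$, as well as the hypotheses in (ii) and (iii), are purely local at the origin: one may replace the neighbourhood $D$ by $D\cap D_0$ without affecting $\mathscr{A}V\le0$ or $\lim_{x\to0}V(x,i)=\infty$, and whether $\sup_{t}|X_t^{x,i}|$ ever exceeds a given small $\veps<\alpha_1$ is decided before $X$ leaves $D_0$. Therefore (ii) and (iii) follow at once from the Khasminskii-type Lyapunov criteria for Markovian regime-switching systems; I would cite \cite[Chapter 7]{YZ} (see also \cite{SW18,SX}). When $d=1$ and $(q_{ij})$ has the form \eqref{a-4}, the same reduction works, $D_0$ being replaced by a sufficiently small interval around $0$.

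Assertion (i) I would prove directly. Fix a regime $i$ first. By (A1) and $b(0,i)=\sigma(0,i)=0$ from (H1) one has $|b(x,i)|\le K_1|x|$ and $\sigma(x,i)^2\le K_1^2x^2$, hence, for $h(x):=-\log|x|$,
\[
\mathcal{L}^{(i)}h(x)=-\frac{b(x,i)}{x}+\frac{\sigma(x,i)^2}{2x^2}\le K_1+\frac{K_1^2}{2}=:C,\qquad x\neq0.
\]
For $0<\veps<|y|<R$, Dynkin's formula applied up to $\tau_\veps\wedge\tau_R\wedge t$ (the first hitting times of $\{-\veps,\veps\}$ and of $\{-R,R\}$, truncated at time $t$) gives $\E_y\big[h(X_{\tau_\veps\wedge\tau_R\wedge t})\big]\le h(y)+Ct$; bounding the left-hand side from below by $(-\log\veps)\,\p_y(\tau_\veps\le\tau_R\wedge t)-|\log R|$ and letting $\veps\downarrow0$ forces $\p_y(\tau_0\le\tau_R\wedge t)=0$, after which $R\uparrow\infty$ (no explosion, by the linear growth from (A1)) and $t\uparrow\infty$ give $\p_y(\exists\,t>0:X_t=0)=0$. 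For the switching process, the jump times $0=\zeta_0<\zeta_1<\cdots$ of $\La$ satisfy $\zeta_k\to\infty$ (the rates are bounded, by (A2)) and $X$ does not move at a jump; an induction on $k$ — on $[\zeta_k,\zeta_{k+1})$ the process $X$ solves a fixed-regime SDE started from $X_{\zeta_k}\neq0$, hence avoids $0$, so $X_{\zeta_{k+1}}\neq0$ — then yields $\p(X_t^{x,i}\neq0,\ \forall\,t>0)=1$ for every $x\neq0$, $i\in\S$.

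For completeness I would also record the standard Khasminskii supermartingale arguments underlying (ii) and (iii). For (ii): with $\tau_\veps$ the hitting time of $\{|y|=\veps\}$ and $\tau_D$ the exit time of $D$, It\^o's formula together with $\mathscr{A}V\le0$ shows that $V(X_{t\wedge\tau_\veps\wedge\tau_D},\La_{t\wedge\tau_\veps\wedge\tau_D})$ is a nonnegative supermartingale; since $y\mapsto\inf_{j\in\S}V(y,j)$ is continuous and vanishes only at $0$, it is bounded below by some $\beta_\veps>0$ on the compact sphere $\{|y|=\veps\}$, and the supermartingale estimate gives $\p(\sup_{t\ge0}|X_t^{x,i}|>\veps)\le V(x,i)/\beta_\veps\to0$ as $x\to0$, i.e.\ stability in probability; the a.s.\ limit of the supermartingale, combined with $\mathscr{A}V\le0$ and the partial non-degeneracy in (H1), then forces $V(X_t,\La_t)\to0$, hence $X_t\to0$, on the event (of probability tending to $1$ as $x\to0$) that $X$ never leaves $D$. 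For (iii): running the same supermartingale on $D\setminus\{|y|\le\veps\}$ and using $\lim_{x\to0}V(x,i)=\infty$, one finds that $X^{x,i}$ leaves $D$ before entering $\{|y|\le\veps\}$ with probability tending to $1$ as $x\to0$, which is incompatible with stability in probability.

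The parts I expect to require the most care are technical: justifying the It\^o/Dynkin manipulations for functions that are merely $C^2$ \emph{away} from $0$ (handled by always stopping at $\tau_\veps$ and using $V\ge0$, and, when $d=1$, invoking (i)), and making the iterated limits $\veps\downarrow0$, $R\uparrow\infty$, $t\uparrow\infty$ in (i) rigorous. No genuinely new difficulty is introduced by the discontinuity of $x\mapsto q_{ij}(x)$, precisely because it is confined to $\{|x|\ge\alpha_1\}$, away from the equilibrium $x=0$.
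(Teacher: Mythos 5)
Your overall strategy coincides with the paper's: Lemma \ref{lem-3} is not proved there at all, but presented as a summary of known results (citing \cite[Chapter 7]{YZ} and \cite{SW18,SX}), the only substantive remark being exactly the one you make --- that the discontinuities of $x\mapsto q_{ij}(x)$ under \eqref{a-3} lie in $\{|x|\ge\alpha_1\}$, so near the equilibrium the system is Markovian, and that the proof of \cite[Lemma 7.1]{YZ} for part (i) never uses continuity of $q_{ij}$. Your non-attainability argument for (i) via $h(x)=-\log|x|$ and the supermartingale arguments for (ii) are the standard ones and are sound.

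Two small corrections. First, in (iii) your limit goes the wrong way: the supermartingale bound reads $\p(\tau_\veps<\tau_D)\le V(x,i)/\inf_{|y|=\veps,j}V(y,j)$, and as $x\to0$ the numerator $V(x,i)$ blows up, so this does \emph{not} show that the process ``leaves $D$ before entering $\{|y|\le\veps\}$ with probability tending to $1$ as $x\to0$.'' The correct deduction (the one the paper itself carries out explicitly in the proof of Theorem \ref{thm-4}, $1^\circ$) fixes $x\ne0$ and lets $\veps\downarrow0$: then $\p(\tau_\veps<\tau_D)\to0$ while $\tau_\veps\uparrow\infty$ (by non-attainability of $0$), forcing $\p\big(\sup_{t\ge0}|X_t^{x,i}|<\delta\big)=0$ for every $x\ne0$, which contradicts stability in probability. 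Second, your closing claim that for the form \eqref{a-4} ``the same reduction works on a small interval around $0$'' fails precisely when $0$ is one of the partition points (as in \eqref{f-2}), where the rates take different values on either side of $0$ in every neighbourhood; this is why the paper proves Theorem \ref{thm-4} separately and notes in Remark \ref{rem-2} that Lemma \ref{lem-3}(ii) cannot be invoked directly there. Neither point affects the validity of Lemma \ref{lem-3} itself, which is stated in the setting \eqref{a-3}.
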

Notice that in the proof of Lemma \ref{lem-3}(i) in \cite[Lemma 7.1]{YZ}, the assumption that $x\mapsto q_{ij}(x)$ is continuous has not been used. Hence, this assertion still works for hybrid systems with piecewise constant type switching.

To proceed, let us introduce two types of conditions in order to deal with separately linear systems or nonlinear systems, which have been explored in \cite{SX}.
\begin{itemize}
  \item[$\mathrm{(L1)}$]  There exist constants $\beta_i\in\R$ for every $i\in\S$, a neighborhood $D$ of $0$ in $\R^d$, a function $\rho:D\backslash\{0\}\to (0,\infty)$ satisfying $\rho\in C^2(D\backslash\{0\})$ such that
      \[\mathcal{L}^{(i)}\rho(x)\leq \beta_i\rho(x),\quad \ \forall\,x\in D\backslash\{0\},\, i\in\S.
      \]
  \item[$\mathrm{(L2)}$] There exist constants $\beta_i\in\R$ for every $i\in\S$, a neighborhood $D$ of $0$ in $\R^d$, functions $\rho,h:D\backslash\{0\}\to (0,\infty)$ satisfying $\rho,\,h\in C^2(D\backslash\{0\})$ such that
      \[\mathcal{L}^{(i)} \rho(x)\leq \beta_ih(x),\quad \forall\, x\in D\backslash\{0\},\,i\in\S,\ \text{and}\  \lim_{x\to 0}\frac{h(x)}{\rho(x)}=0,\ \lim_{x\to 0}\frac{\mathcal{L}^{(i)}h(x)}{h(x)}=0.\]
\end{itemize}
Based on (L1) and (L2), \cite{SX} established some sufficient conditions on the stability and unstablity of Markovian regime-switching. These conditions are quite sharp, which can provide cut-off type criteria for some concrete examples including linear hybrid systems and non-linear hybrid systems. In next theorem, we shall see that the $Q$-matrix $(q_{ij}^{(1)})$ in the piecewise constant type switching $(q_{ij}(x))$  given in \eqref{a-3} will play the dominant role in studying the stability of $(X_t,\La_t)$ at the equilibrium point $x=0$. In the following, $\bm{\xi}=(\xi_1,\ldots,\xi_N)\gg 0$ means that $\xi_i>0$ for all $i=1,\ldots,N$.

\begin{mythm}\label{thm-3}
Let $(X_t,\La_t)$ be a hybrid system satisfying \eqref{a-1}, \eqref{a-2}, \eqref{a-3}. Assume $\mathrm{(H1)}$, $\mathrm{(A1)}$, $\mathrm{(A2)}$ hold.  Let $\pi^{(1)}=(\pi_i^{(1)})_{i\in\S}$ be the invariant probability measure of $(q_{ij}^{(1)})$. Suppose one of $\mathrm{(L1)}$ and  $\mathrm{(L2)}$ holds with $\sum_{i\in\S}\pi_i^{(1)}\beta_i<0$.  Then the equilibrium point $x=0$ is asymptotically stable in probability if $\rho(x)$ vanishes only at $0$, and is unstable in probability if $\lim_{x\to 0} \rho(x)=\infty$.
\end{mythm}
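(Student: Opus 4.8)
The plan is to construct an explicit Lyapunov function $V(x,i)$ on a small punctured neighborhood $D\backslash\{0\}$ of the origin, of the multiplicative form $V(x,i)=\xi_i\,\rho(x)^\theta$ (in the linear case (L1)) or a suitable analogue built from $\rho$ and $h$ (in the nonlinear case (L2)), and then to apply Lemma~\ref{lem-3}(ii) or (iii). The crucial observation, which must be isolated first, is that near $x=0$ the switching rates $q_{ij}(x)$ coincide with the constant $Q$-matrix $(q_{ij}^{(1)})$: indeed, by \eqref{a-3}, for all $x$ with $|x|<\alpha_1$ one has $q_{ij}(x)=q_{ij}^{(1)}$. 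Hence, shrinking $D$ so that $D\subset\{|x|<\alpha_1\}$, the generator restricted to $D\backslash\{0\}$ reads $\mathscr{A}V(x,i)=\mathcal{L}^{(i)}V(\cdot,i)(x)+\sum_{j\neq i}q_{ij}^{(1)}\big(V(x,j)-V(x,i)\big)$, i.e.\ the problem reduces \emph{exactly} to a Markovian regime-switching stability problem with $Q$-matrix $(q_{ij}^{(1)})$, to which the machinery of \cite{SX} applies. This is the conceptual heart of the theorem and it is why $(q_{ij}^{(1)})$ — the block governing the innermost region — is the dominant one.

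For the stability half (when $\rho$ vanishes only at $0$), I would proceed as follows. Since $(q_{ij}^{(1)})$ is irreducible with invariant measure $\pi^{(1)}$ and $\sum_i\pi_i^{(1)}\beta_i<0$, a standard perturbation/Fredholm-alternative argument gives, for all small $\theta>0$, a vector $\bm{\xi}^{(\theta)}=(\xi_i^{(\theta)})\gg 0$ and a constant $\eta_\theta$ with $\eta_\theta\to \sum_i\pi_i^{(1)}\beta_i<0$ as $\theta\downarrow 0$, solving the eigenvalue-type inequality $\theta\beta_i\xi_i^{(\theta)}+\sum_{j\neq i}q_{ij}^{(1)}(\xi_j^{(\theta)}-\xi_i^{(\theta)})\le \eta_\theta\,\xi_i^{(\theta)}$ for each $i$ (this is precisely the algebraic lemma from \cite{SX}; I would cite it rather than reprove it). Set $V(x,i)=\xi_i^{(\theta)}\rho(x)^\theta$. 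Using (L1), $\mathcal{L}^{(i)}\rho^\theta=\theta\rho^{\theta-1}\mathcal{L}^{(i)}\rho+\tfrac12\theta(\theta-1)\rho^{\theta-2}|\sigma^\ast\nabla\rho|^2\le \theta\beta_i\rho^\theta$ (the second-order term has a favorable sign for $\theta<1$), so $\mathscr{A}V(x,i)\le \eta_\theta\,V(x,i)\le 0$ on $D\backslash\{0\}$ for $\theta$ small. Since $\rho$ vanishes only at $0$, $\inf_{i}V(x,i)$ vanishes only at $0$ and $V$ is $C^2$ off the origin, so Lemma~\ref{lem-3}(ii) yields asymptotic stability in probability. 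For the instability half (when $\rho(x)\to\infty$ as $x\to 0$), one chooses $\theta>0$ small in the same way but now $V(x,i)=\xi_i^{(\theta)}\rho(x)^\theta\to\infty$ as $x\to 0$, the sign computation is unchanged, so $\mathscr{A}V\le 0$ and Lemma~\ref{lem-3}(iii) gives instability in probability.

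Under the alternative hypothesis (L2) the same scheme works with the test function $V(x,i)=\xi_i\big(\rho(x)-c\,h(x)\big)$ or $V(x,i)=\xi_i\rho(x)+(\text{correction in }h)$: the two limit conditions $h/\rho\to0$ and $\mathcal{L}^{(i)}h/h\to0$ are exactly what is needed so that the $h$-contribution to $\mathscr{A}V$ is lower order and the leading behavior is again governed by $\beta_i$ and $(q_{ij}^{(1)})$; I would invoke the corresponding construction from \cite{SX} verbatim, noting that it is purely local and hence insensitive to the discontinuity of $q_{ij}(\cdot)$ away from $D$. The main obstacle — really the only nontrivial point — is verifying cleanly that the localization is legitimate: one must check that Lemma~\ref{lem-3} and the Lyapunov criteria it summarizes do not secretly use continuity of $x\mapsto q_{ij}(x)$ on the \emph{whole} space, only inside $D$ where our switching is constant; the paper has already flagged (after Lemma~\ref{lem-3}) that part (i) survives the discontinuity, and the same inspection applies to (ii)–(iii) since their proofs are confined to $D\backslash\{0\}$. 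Once this is granted, the theorem follows by importing the Markovian result of \cite{SX} with $Q=(q_{ij}^{(1)})$.
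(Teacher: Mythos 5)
Your proposal is correct and follows essentially the same route as the paper: localize to $\wt D=D\cap\{|x|<\alpha_1\}$ where $Q(x)\equiv Q^{(1)}$, build $V(x,i)=\xi_i\rho(x)^p$ via the Perron--Frobenius eigenvector of $Q^{(1)}+p\,\mathrm{diag}(\bm\beta)$ under (L1) and $V(x,i)=\rho(x)+\xi_i h(x)$ via the Fredholm alternative under (L2), then apply Lemma \ref{lem-3}(ii),(iii). Your explicit remark that the second-order term $\tfrac12 p(p-1)\rho^{p-2}|\sigma^\ast\nabla\rho|^2$ is nonpositive for $p<1$ is in fact slightly more careful than the paper, which writes that step as an equality.
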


\begin{proof}
  Let $\wt D=D\cap\{x;|x|<\alpha_1\}$ with $D$ in (L1) or (L2), which is a new neighborhood of $0$ in $\R^d$.

\noindent \textbf{Case 1}. Suppose (L1) holds.
According to the Perron-Frobenius theorem, \[-\eta_p:=\max\big\{\mathrm{Re}\gamma;\,\gamma \ \text{in the spectrum of $Q^{(1)}+p\,\mathrm{diag}({\bm \beta})$}\big\} \] is a simple eigenvalue  of $Q^{(1)}+p\,\mathrm{diag}({\bm \beta})$  with an eigenvector ${\bm\xi}=(\xi_i)_{i\in\S}\gg 0$, where $\mathrm{diag}({\bm \beta})$ denotes the diagonal matrix generated by the vector ${\bm \beta}=(\beta_1,\ldots,\beta_N)$.  By virtue of \cite[Proposition 4.2]{Bar}, when $\sum_{i\in\S} \pi_i^{(1)}\beta_i<0$, there exists  $p_0>0$ such that for any $0<p<p_0$, $\eta_p>0$,
\begin{equation}\label{f-1} \big(Q^{(1)}+p\,\mathrm{diag}({\bm \beta})\big){\bm \xi}(i)=-\eta_p\xi_i<0,\qquad i\in\S.
\end{equation}
For $p\in (0,p_0)$, let $V(x,i)=\xi_i\rho(x)^p$, then
\[\mathscr{A}V(x,i)=(Q^{(1)}{\bm\xi}(i) +p\beta_i\xi_i)\rho(x)^p =-\eta_p\xi_i\rho(x)^p<0,\quad x\in \wt D,\ i\in \S.\]
Since $(\xi_i)$ is positive and bounded, by virtue of Lemma \ref{lem-3}(ii),(iii), we obtain that $x=0$ is asymptotically stable in probability if $\rho(x)$ vanishes only at $0$, and is unstable in probability if $\lim_{x\to 0}\rho(x)=\infty$.

\noindent \textbf{Case 2}. Suppose (L2) holds. According to the Fredholm alternative, it follows from the fact $\sum_{i\in\S}\pi_i^{(1)}\beta_i<0$ that there exist  a $c>0$ and a vector ${\bm\xi}=(\xi_i)_{i\in\S}\gg 0$ such that
\begin{align*}
  Q(x){\bm \xi}(i)&=\sum_{j\neq i}q_{ij}(x)(\xi_j-\xi_i)=\sum_{j\neq i}q_{ij}^{(1)}(\xi_j-\xi_i)\\
  &=-c-\beta_i,\qquad i\in \S, \ x\in \wt{D}.
\end{align*}
Put $V(x,i)=\rho(x)+\xi_i h(x)$, then
\begin{align*}
  \mathscr{A}V(x,i)&=\mathcal{L}^{(i)}\rho(x)+Q(x){ \bm \xi}(i)h(x)+\xi_i\mathcal{L}^{(i)}h(x)\\
  &\leq \Big(\beta_i+Q^{(1)}{\bm \xi}(i)+\xi_i \frac{\mathcal{L}^{(i)}h(x)}{h(x)}\Big) h(x)\\
  &\leq \Big(-c+\xi_i \frac{\mathcal{L}^{(i)}h(x)} {h(x)}\Big)h(x),\quad x\in \wt D, \ i\in\S.
\end{align*}
Since ${\bm\xi}$ is bounded and $\lim_{x\to 0}\frac{\mathcal{L}^{(i)}h(x)}{h(x)}=0$, there exists $\delta>0$ such that $\forall\, x\in\R^d$ with $0<|x|<\delta$ it holds
\[\frac{\mathcal{L}^{(i)}h(x)}{h(x)}\leq \frac{c}{2}.\]
Thus, we get
\[\mathscr{A}V(x,i)\leq -\frac{c}2 h(x)\leq 0,\quad \ \forall \,x\in \wt{D}\cap\{x;|x|<\delta\}, \,x\neq 0,\ i\in\S,\] which implies the desired conclusion due to Lemma \ref{lem-3}(ii),(iii).
\end{proof}

\begin{myrem}
  For the case that the assumption (L1) holds, Theorem \ref{thm-3} can also be proved using the results in \cite{NY18}.
\end{myrem}

Beyond the piecewise constant type switching $(q_{ij}(x))$ studied in Theorem \ref{thm-3}, we go to consider another complicated  case in $\R$:
\begin{equation}
\label{f-2}
q_{ij}(x)=q_{ij}^{(1)} \mathbf1_{ [0, \alpha_1) }(x)\!+\!\tilde q_{ij}^{(1)} \mathbf1_{(\alpha_{-1}, 0)}(x)\!+\!\sum_{k=2}^{m_1}q_{ij}^{(k)} \mathbf1_{[\alpha_{k-1},  \alpha_k)}(x)\!+\!\sum_{l=1}^{m_2} \tilde q_{ij}^{(l)}
\mathbf1_{(\alpha_{-l-1}, \alpha_{-l}]}(x),
\end{equation} where $\{-\infty=\alpha_{-m_2-1}<\ldots<\alpha_{-1}<0 <\alpha_1<\ldots<\alpha_{m_1}=\infty\}$ is a finite partition of $\R$, and $(q_{ij}^{(k)})$, $(\tilde q_{ij}^{(l)})$ are conservative, irreducible $Q$-matrices for $k\in \{1,\ldots,m_1\}$ and $l\in\{1,\ldots,m_2\}$. Notice that in this situation, there are two different $Q$-matrices $(q_{ij}^{(1)})$ and $(\tilde q_{ij}^{(1)})$ in any neighborhood of the equilibrium point $x=0$. The corresponding Markovian regime-switching processes associated respectively with the transition rate matrix $(q_{ij}^{(1)})$ and $(\tilde q_{ij}^{(1)})$ may own quite different stability at the equilibrium point $x=0$.

\begin{mythm}\label{thm-4}
Let $(X_t,\La_t)$ be a hybrid system satisfying \eqref{a-1}, \eqref{a-2} with $d=1$ and $(q_{ij}(x))$ being given in \eqref{f-2}. Assume $\mathrm{(H1)}$, $\mathrm{(A1)}$, $\mathrm{(A2)}$ hold. Denote by $\pi^{(1)}=(\pi_i^{(1)})$ and $\tilde{ \pi}^{(1)}=(\tilde{\pi}_i^{(1)})$ respectively the invariant probability measure of $(q_{ij}^{(1)})$ and $(\tilde{q}_{ij}^{(1)})$. Suppose that one of $\mathrm{(L1)}$ and $\mathrm{(L2)}$ holds.
\begin{itemize}
  \item[$1^\circ$] If one of $\sum_{i\in\S}\pi_i^{(1)}\beta_i<0$ and $\sum_{i\in\S}\tilde{\pi}_i^{(1)}\beta_i <0$ holds, and $\lim_{x\to 0}\rho(x)=\infty$, then the equilibrium point $x=0$ is unstable in probability.
  \item[$2^\circ$] If $\sum_{i\in\S}\pi_i^{(1)}\beta_i<0$ and $\sum_{i\in\S}\tilde{\pi}_i^{(1)} \beta_i<0$, and $\rho(x)$ vanishes only at $0$, then the equilibrium point $x=0$ is asymptotically stable in probability.
\end{itemize}
\end{mythm}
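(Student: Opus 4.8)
The plan is to exploit a confinement phenomenon special to the one-dimensional setting. Since $\mathrm{(H1)}$ holds, Lemma \ref{lem-3}(i) gives $\p(X_t^{x,i}\neq 0,\ \forall\,t>0)=1$ for every $x\neq0$; as remarked after Lemma \ref{lem-3}, this uses only the local behaviour near $0$ and not the continuity of $x\mapsto q_{ij}(x)$, so it is available here. Because $t\mapsto X_t$ has continuous paths (the SDE for $X_t$ has no jump term), a trajectory issued from $x>0$ cannot reach the left half-line and hence stays in $(0,\infty)$ for all $t\ge0$, a.s.; symmetrically, a trajectory from $x<0$ stays in $(-\infty,0)$. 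Therefore, near $0$ only two constant rate matrices are ever felt: $Q^{(1)}=(q_{ij}^{(1)})$ on $(0,\alpha_1)$ and $\tilde Q^{(1)}=(\tilde q_{ij}^{(1)})$ on $(\alpha_{-1},0)$, so on each half-line the system reduces near $0$ to a Markovian regime-switching diffusion. I would run the Lyapunov construction from the proof of Theorem \ref{thm-3} separately on the two sides and glue the resulting functions at $0$, working on the neighborhood $\wt D:=D\cap(\alpha_{-1},\alpha_1)$ of $0$.

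For $2^\circ$, suppose first $\mathrm{(L1)}$ holds. From $\sum_i\pi_i^{(1)}\beta_i<0$ and $\sum_i\tilde\pi_i^{(1)}\beta_i<0$, \cite[Proposition 4.2]{Bar} applied to $Q^{(1)}$ and to $\tilde Q^{(1)}$ provides $p_0\in(0,1)$ so that for each $p\in(0,p_0)$ there are $\bm\xi,\tilde{\bm\xi}\gg0$ and $\eta_p,\tilde\eta_p>0$ with $(Q^{(1)}+p\,\diag(\bm\beta))\bm\xi(i)=-\eta_p\xi_i$ and $(\tilde Q^{(1)}+p\,\diag(\bm\beta))\tilde{\bm\xi}(i)=-\tilde\eta_p\tilde\xi_i$. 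I set
\[
V(x,i)=\xi_i\,\rho(x)^p\,\mathbf1_{[0,\infty)}(x)+\tilde\xi_i\,\rho(x)^p\,\mathbf1_{(-\infty,0)}(x),\qquad x\in\wt D,\ i\in\S.
\]
Since $\rho$ vanishes only at $0$, $V(x,i)\to0$ as $x\to0$ from either side, so $V$ extends continuously with $V(0,i)=0$; $\inf_{i\in\S}V(\cdot,i)$ vanishes only at $0$; and $V\in C^2((\wt D\setminus\{0\})\times\S)$, as $\wt D\setminus\{0\}$ is a union of two open intervals on each of which $V(\cdot,i)$ is a smooth expression. Using $\mathcal L^{(i)}(\rho^p)\le p\beta_i\rho^p$ (valid for $0<p<1$, the second-order term being nonpositive), the computation in the proof of Theorem \ref{thm-3} gives $\mathscr A V(x,i)\le-\eta_p\xi_i\rho(x)^p<0$ on $(0,\alpha_1)$ and $\mathscr A V(x,i)\le-\tilde\eta_p\tilde\xi_i\rho(x)^p<0$ on $(\alpha_{-1},0)$, so $\mathscr A V\le0$ on $(\wt D\setminus\{0\})\times\S$; Lemma \ref{lem-3}(ii) yields asymptotic stability in probability. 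If instead $\mathrm{(L2)}$ holds, the Fredholm alternative applied to $Q^{(1)}$ and $\tilde Q^{(1)}$ gives $c,\tilde c>0$ and $\bm\xi,\tilde{\bm\xi}\gg0$ with $Q^{(1)}\bm\xi(i)=-c-\beta_i$ and $\tilde Q^{(1)}\tilde{\bm\xi}(i)=-\tilde c-\beta_i$; taking $V=\rho+\xi_i h$ on $\wt D\cap[0,\infty)$ and $V=\rho+\tilde\xi_i h$ on $\wt D\cap(-\infty,0)$, one has $\rho,h\to0$ at $0$ (because $h/\rho\to0$ and $\rho$ vanishes only at $0$), so $V$ is continuous with $V(0,i)=0$, and the Case~2 computation of Theorem \ref{thm-3} gives $\mathscr A V(x,i)\le-\tfrac12(c\wedge\tilde c)\,h(x)\le0$ for $|x|$ small; Lemma \ref{lem-3}(ii) again concludes.

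For $1^\circ$, after the reflection $x\mapsto-x$ I may assume $\sum_i\pi_i^{(1)}\beta_i<0$. Now the trajectory from any $x>0$ stays in $(0,\infty)$, where near $0$ the switching is Markovian with matrix $Q^{(1)}$. Under $\mathrm{(L1)}$ I take $V(x,i)=\xi_i\rho(x)^p$ on $(0,\alpha_1)\cap D$ with $p\in(0,p_0)$ and $\bm\xi\gg0$ the Perron eigenvector above; then $\mathscr A V(x,i)\le-\eta_p\xi_i\rho(x)^p\le0$, while $V(x,i)\to\infty$ as $x\downarrow0$ because $\rho\to\infty$; under $\mathrm{(L2)}$ one uses $V=\rho+\xi_i h$ together with $\lim_{x\to0}\mathcal L^{(i)}h/h=0$ in the same way, again with $V\to\infty$ at $0$ since $\rho\to\infty$. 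The remaining point—which I expect to be the main obstacle—is that Lemma \ref{lem-3}(iii) is stated for a full punctured neighborhood of $0$, whereas here I only control $V$ (and the sign of $\mathscr A V$) on the half-line $(0,\infty)$: when $\sum_i\tilde\pi_i^{(1)}\beta_i>0$ there is provably no positive $\tilde{\bm\xi}$ making $\mathscr A V\le0$ on the left, so the Lyapunov function cannot be extended across $0$. The confinement closes exactly this gap: the process never enters $(-\infty,0)$, so Khasminskii's instability argument can be run on $(0,\infty)$ alone (using also the uniform ellipticity on $\{\veps<|x|<r_0\}$ from $\mathrm{(H1)}$), giving for each $i\in\S$ some $r>0$ with $\liminf_{x\downarrow0}\p\big(\sup_{t\ge0}X_t^{x,i}\ge r\big)>0$, i.e.\ $x=0$ is not stable in probability. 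Turning this ``one-sided Khasminskii'' step into a rigorous statement is the only genuinely new ingredient beyond the proof of Theorem \ref{thm-3}.
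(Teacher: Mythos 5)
Your proposal is correct and follows essentially the same route as the paper: Lemma \ref{lem-3}(i) plus path continuity confines the process to one half-line, a piecewise Lyapunov function is built from the eigenvector/Fredholm data of $Q^{(1)}$ on $(0,\alpha_1)$ and of $\tilde Q^{(1)}$ on $(\alpha_{-1},0)$, and the instability in $1^\circ$ is obtained by exactly the ``one-sided Khasminskii'' stopping-time argument you anticipated (the paper runs it with $\tau_\veps$ and $\eta_\delta$, letting $\veps\downarrow0$ and using $\lim_{x\to0}\rho(x)=\infty$). The only cosmetic difference is that for $2^\circ$ the paper re-runs the argument of \cite[Lemma 7.5, Lemma 7.6]{YZ} on each half-line rather than citing Lemma \ref{lem-3}(ii) for the glued function (cf.\ Remark \ref{rem-2}), which is substantively the same step.
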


\begin{proof}
  $1^\circ$ According to the definition, it is enough to show that for any $\veps>0$, it holds $\lim_{x\to 0^+} \p\big(\sup_{t\geq 0} |X_t^{x,i}|>\veps\big)>0$ or $\lim_{x\to 0^-}\p\big(\sup_{t\geq 0} |X_t^{x,i}|>\veps\big)>0$.  Let $\delta>0$ satisfying $\{x; |x|<\delta\}\subset D$. For $0<\veps<|x|$, define the stopping times
  \[\tau_\veps=\inf\{t>0;|X_t^{x,i}|\leq \veps\}, \quad \eta_{\delta}=\inf\{t>0;|X_t^{x,i}|\geq \delta\}.\]
  We shall only prove this theorem in the situation (L2) holds, and the situation that (L1) holds can be proved with suitable modification as those in Theorem \ref{thm-3}.

  Suppose $\sum_{i\in\S}\pi_i^{(1)} \beta_i<0$, then there exist $c>0$, ${\bm \xi}=(\xi_i)\geq 0$ such that
  \[Q^{(1)}{\bm \xi}(i)=-c-\beta_i,\quad i\in\S.\]
  Define  $V(y,j)=\rho(y)+\xi_jh(y)$, then
  \begin{align*}
    \mathscr{A}V(y,j)&=\mathcal{L}^{(j)}\rho(y)+ \xi_j\mathcal{L}^{(j)} h(y) + Q^{(1)}{\bm \xi}(j) h(y)\\
    &\leq \Big(-c+\xi_j\frac{\mathcal{L}^{(j)}h(y)} {h(y)}\Big)h(y),\quad 0< y< \min \{\delta, \alpha_1\}, \ j\in \S.
  \end{align*}
  As $\lim\limits_{y\to 0} \frac{\mathcal{L}^{(j)} h(y)}{h(y)}=0$, we can take $\delta $ small enough so that   $\forall\,0<\!y\!<\!\delta\!<\!\alpha_1$,
  $ \mathcal{L}^{(j)} h(y) /{h(y)}<\frac c2$,
  and further
   \begin{equation}\label{f-3}
   \mathscr{A}V(y,j)\leq -\frac c2h(y)\leq 0,\quad 0<y< \delta, \ j\in \S.
   \end{equation}
   It follows from It\^o's formula that
   \begin{equation}\label{f-4}
   \E\big[V(X^{x,i}(t\wedge \tau_\veps\wedge \eta_\delta),\La^{x,i}(t\wedge \tau_\veps\wedge \eta_\delta))\big]=V(x,i)+\E\Big[\int_0^{t\wedge \tau_\veps\wedge \eta_\delta}\!\!\mathscr{A}V(X_s^{x,i},\La_s^{x,i}) \d s\Big].\end{equation}
   Here, write $X^{x,i}(t)=X_t^{x,i}$. Due to Lemma \ref{lem-3}(i), as $x>0$, it holds $\p\big(X_s^{x,i}>0,\ \forall\,s\geq 0\big)=1$. By virtue of \eqref{f-3} and the definition of $\eta_\delta$, it follows from \eqref{f-4} by letting $t\to\infty$ that
   \begin{equation}\label{f-5}
   \begin{aligned}
     V(x,i)&\geq \E\big[V(X^{x,i}(\tau_\veps\wedge \eta_\delta),\La^{x,i}(\tau_\veps\wedge \eta_\delta))\big]\\&
     \geq \E\big[ V(X^{x,i}(\tau_\veps),\La^{x,i}(\tau_\veps)) \mathbf1_{\tau_\veps<\eta_\delta}\big]\\
     &\geq \inf_{0<y<\veps,j\in\S}\!\!V(y,j) \, \p(\tau_\veps<\eta_\delta)\\
     &=\inf_{0<y<\veps,j\in\S}\!\!V(y,j) \, \p\big(\sup_{0<t\leq \tau_\veps}\!|X_t^{x,i}|<\delta\big).
   \end{aligned}
   \end{equation}
   It follows from Lemma \ref{lem-3}(i) that $\lim_{\veps\to 0}\tau_\veps=\infty$, a.s. Indeed, let $A=\{\omega;\tau_0(\omega):=\lim_{\veps\to0} \tau_\veps(\omega) <\infty\}$. For $\omega\in A$, as $|X_{\tau_\veps}^{x,i}(\omega)|=\veps$ and $\tau_\veps\downarrow \tau_0$, we get $|X_{\tau_0}^{x,i}(\omega)|=0$. If $\p(A)>0$, then $\p\big(X_t^{x,i}=0 \ \text{for some $t\geq 0$}\big)\geq \p(A)>0$, which contradicts Lemma \ref{lem-3}(i).

   By the boundedness of $(\xi_i)$, $\lim_{x\to 0}\frac{h(x)}{\rho(x)}=0$ and $\lim_{x\to 0}\rho(x)=\infty$,  we obtain that
   \[\lim_{\veps\to 0} \inf_{0<y<\veps,j\in\S} V(y,j)=\lim_{\veps\to 0}\inf_{0<y<\veps,j\in\S}\rho(y)\Big(1+\xi_j \frac{h(y)}{\rho(y)}\Big)=\infty.\]
   Invoking \eqref{f-5}, this means that
   \[\p\big(\sup_{t\geq 0}|X_t^{x,i}|<\delta\big)=0.\]
   Thus, the equilibrium point $x=0$ is unstable in probability by definition.

   $2^\circ$ The basic idea of the proof is similar to that of part $1^\circ$ by using the argument of Lemma \ref{lem-3}(ii) (cf. \cite[Lemma 7.5, Lemma 7.6]{YZ}).
   We still only consider the situation that (L2) holds.

   Since $\sum_{i\in\S}\pi_i^{(1)}\beta_i<0$ and $\sum_{i\in\S}\tilde \pi^{(1)}_i\beta_i<0$, by the Fredholm alternative, there exist a constant $c>0$, vectors ${\bm\xi}$ and $\tilde{\bm{\xi}}$ such that
   \[Q^{(1)}{\bm \xi}(i)=-c-\beta_i,\quad \wt Q^{(1)}{\tilde{\bm\xi}}(i)=-c-\beta_i.\]
   According to the initial value $(X_0,\La_0)=(x,i)$, define
   \begin{equation*}
   V(y,j)=\begin{cases}
     \rho(y)+\xi_jh(y) &\text{if $(y,j)\in (0,\infty)\times\S$},\\
     \rho(y)+\tilde\xi_jh(y)&\text{if $(y,j)\in (-\infty,0)\times\S$}.
   \end{cases}
   \end{equation*}
   Take $\delta >0$ such that $\delta<\alpha_1\wedge (-\alpha_{-1})$ and $\{x; x<\delta\}\subset D$.
   Due to Lemma \ref{lem-3}(i), when $\delta>0$ is small enough, it holds $\mathscr{A}V(X_s^{x,i},\La_s^{x,i})\leq 0$ for $s\leq t\wedge \eta_\delta$ a.s. and hence
   \begin{align*}
   V(x,i)&\geq \E\big[V(X^{x,i}(t\wedge \eta_\delta), \La^{x,i}(t\wedge \eta_\delta) )\big]\\
   &\geq \lambda_\delta \p(\eta_\delta\leq t),
   \end{align*}
   where $\lambda_\delta:=\inf\{V(y,j); |y|>\delta,\,j\in\S\}>0$. Note that $\eta_\delta\leq t$ if and only if $\sup_{0\leq s\leq t} |X_s^{x,i}|>\delta$. Therefore, it follows that
   \begin{equation}\label{f-6}
   \p\big(\sup_{0\leq s\leq t}|X_s^{x,i}|>\delta\big)\leq \frac{V(x,i)}{\lambda_\delta}.
   \end{equation}
   From $\lim_{x\to 0}\rho(x)=\lim_{x\to 0}h(x)/\rho(x)=0$, we get $\lim_{x\to 0}V(x,i)=0$.
   Then, it follows from \eqref{f-6}  that
   \[\lim_{x\to 0} \p\big(\sup_{0\leq s\leq t}|X_s^{x,i}|>\delta\big)=0,\]
   which implies $x=0$ is stable in probability. Furthermore, we can follow the argument of \cite[Lemma 7.6]{YZ} to conclude $x=0$ is asymptotically stable in probability with suitable modification as above, whose details are omitted. The proof is complete.
\end{proof}

\begin{myrem}\label{rem-2}
Note that in the previous argument of Theorem \ref{thm-4}, $2^\circ$, we cannot use directly Lemma \ref{lem-3}(ii), because the function $V$ is defined depending on the initial value $(x,i)$ and we used the property that $X_t^{x,i}$ possesses the same sign  as $x$ for all $t>0$ almost surely. This also makes it difficult to extend Theorem \ref{thm-4} to the higher dimensional space.
\end{myrem}

\begin{myexam}\label{exam-1}
Consider the non-linear hybrid system $(X_t,\La_t)\in\R\!\times\!\S$ satisfying
\begin{equation}\label{f-7}
\d X_t=b_{\La_t} \mathrm{sgn} (X_t)\big(|X_t|^p\!\wedge\!|X_t|\big) \d t+\sigma_{\La_t}\big(|X_t|^q\!\wedge\!|X_t|\big)\d B_t,
\end{equation}
where $p,q>1$ satisfying $p\leq 2q-1$,  $\mathrm{sgn}(x)=1$ if $x\geq 0$; $=-1$ if $x<0$.  $(\La_t)$ is a jumping process on $\S=\{1,2,\ldots,N\}$ satisfying \eqref{a-2} with $(q_{ij}(x))$ given by
\begin{equation}\label{f-8}
q_{ij}(x)=q_{ij}^{(1)}\mathbf1_{[0,\alpha_1)}(x)\!+ \!q_{ij}^{(2)}\mathbf1_{[\alpha_1,\infty)}(x)\!+\! \tilde q_{ij}^{(1)}\mathbf1_{(\alpha_{-1},0)}(x)\!+\!\tilde q_{ij}^{(2)}\mathbf1_{(-\infty,\alpha_{-1}]}(x),
\end{equation}
where $(q_{ij}^{(1)})$, $(q_{ij}^{(2)})$, $(\tilde q_{ij}^{(1)})$, and $(\tilde q_{ij}^{(2)})$ are all conservative, irreducible $Q$-matrices on $\S$; $-\infty<\alpha_{-1}<0<\alpha_1<\infty$. Denote by $(\pi_i^{(1)})$ and $(\tilde\pi_i^{(1)})$ the invariant probability measure  of $(q_{ij}^{(1)})$ and $(\tilde q_{ij}^{(1)})$ respectively. Let
\begin{equation*}
  \beta_i=\begin{cases}
    b_i, &p<2q-1,\\
    b_i-\frac 12\sigma_i^2, &p=2q-1,
  \end{cases} \quad \ \ i\in \S.
\end{equation*} Then,
\begin{itemize}
  \item[$(1)$] If $\sum_{i\in\S}\pi_i^{(1)}\beta_i<0$ and $\sum_{i\in\S}\tilde \pi_i^{(1)}\beta_i<0$, then the equilibrium point $x=0$ is asymptotically stable in probability.
  \item[$(2)$] If $\max\Big\{\sum_{i\in\S}\pi_i^{(1)}\beta_i, \sum_{i\in\S}\tilde \pi_i^{(1)}\beta_i\Big\}>0$, then $x=0$ is unstable in probability.
\end{itemize}
\end{myexam}
We can prove the assertions in Example \ref{exam-1} using Theorem \ref{thm-4} by taking suitable functions $\rho(x)$ and $h(x)$. A little precisely,
for assertion (1), we take $\rho(x)=|x|^{\gamma}$ with $\gamma>0$, and $h(x)=\gamma|x|^{\gamma+p-1}$. One can check directly that
\[\lim_{x\to 0} \frac{h(x)}{\rho(x)}=0,\quad \ \lim_{x\to 0}\frac{\mathcal{L}^{(i)}h(x)} {h(x)}=0,\quad i\in\S.\]
We shall use the arbitrariness of $\gamma>0$ to obtain $\beta_i$ given in this example.
For assertion (2), we take $\rho(x)=|x|^{-\gamma} $ with $\gamma>0$, $h(x)=|x|^{p-\gamma-1}$. One can refer to \cite[Theorem 2.8]{SX} for more details.

\subsection{Criteria on ergodicity and transience}

In this subsection, we shall investigate the recurrent properties for hybrid systems with piecewise constant type switching. We aim to extend the criteria established in \cite{Sh15a} for Markovian regime-switching processes to the current situation.

Consider the hybrid system $(X_t,\La_t)$ satisfying \eqref{a-1}, \eqref{a-2} whose generator $\mathscr{A}$ is given in \eqref{f-0}. Similar to the study in the previous subsection, we shall use the following two Lyapunov type conditions to characterize the behavior of the hybrid system in each fixed state $i\in\S$.
\begin{itemize}
  \item[$\mathrm{(L3)}$] There exist a positive  function $\rho\in C^2(\R^d)$, a constant $r_0>0$, $\beta_i\in\R$ for $i\in\S$, such that
      \[\mathcal{L}^{(i)}\rho(x)\leq \beta_i\rho(x),\quad \ |x|>r_0,\ i\in\S.\]
  \item[$\mathrm{(L4)}$] There are two positive functions $\rho,\,h\in C^2(\R^d)$, a constant $r_0>0$, constants $\beta_i\in\R$ for $i\in \S$, such that
      \begin{gather*}
        \mathcal{L}^{(i)}\rho(x)\leq \beta_ih(x),\quad \ |x|>r_0,\ i\in\S,\\
        \lim_{|x|\to \infty}\frac{h(x)} {\rho(x)}=0,\ \lim_{|x|\to \infty} \frac{\mathcal{L}^{(i)}h(x)}{h(x)} =0.
      \end{gather*}
\end{itemize}

\begin{mythm}\label{thm-5}
Let $(X_t,\La_t)$ be a hybrid system satisfying \eqref{a-1}, \eqref{a-2}, \eqref{a-3}. Suppose $\mathrm{(A1)}$, $\mathrm{(A2)}$ hold. Let $(\pi_i^{(m+1)})$ be the invariant probability measure of $Q^{(m+1)}=(q_{ij}^{(m+1)})$. Assume that one of $\mathrm{(L3)}$ and $\mathrm{(L4)}$ holds, and
\[\sum_{i\in\S} \pi_i^{(m+1)}\beta_i<0.\]
Then $(X_t,\La_t)$ is ergodic if $\lim_{|x|\to \infty} \rho(x)=\infty$; is transient if $\lim_{|x|\to \infty} \rho(x)=0$.
\end{mythm}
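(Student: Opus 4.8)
The plan is to run the proof of Theorem~\ref{thm-3} essentially verbatim, with the matrix $Q^{(m+1)}=(q_{ij}^{(m+1)})$ taking over the role that $Q^{(1)}$ played near the origin. The point is that, by \eqref{a-3}, once $|x|\ge\alpha_m$ the switching rates are frozen at $q_{ij}^{(m+1)}$, so on a neighbourhood of infinity the generator $\mathscr{A}$ reduces to the \emph{state-independent} operator $f\mapsto\mathcal{L}^{(i)}f(\cdot,i)(x)+Q^{(m+1)}f(x,\cdot)(i)$, exactly as $\mathcal{L}^{(i)}+Q^{(1)}$ acted near $0$ in Theorem~\ref{thm-3}. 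First I would fix $R\ge r_0\vee\alpha_m$ and build, on $\{|x|>R\}$, a Lyapunov function $V(x,i)\ge 0$, continuous and bounded on compacts, with $\mathscr{A}V(x,i)\le 0$ for $|x|>R$, which is norm-like (i.e. $V(x,i)\to\infty$ as $|x|\to\infty$) when $\lim_{|x|\to\infty}\rho(x)=\infty$, and which is bounded with $V(x,i)\to 0$ when $\lim_{|x|\to\infty}\rho(x)=0$. The ergodicity/transience dichotomy is then read off from the Foster--Lyapunov criteria for regime-switching diffusions in \cite{Sh15a} (the joint process $(X_t,\La_t)$ being non-explosive thanks to the linear growth implicit in (A1) and the boundedness of the switching rates in (A2)).

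For the construction under $\mathrm{(L3)}$, I would argue as in Case~1 of Theorem~\ref{thm-3}: by the Perron--Frobenius theorem and \cite[Proposition~4.2]{Bar}, the hypothesis $\sum_{i\in\S}\pi_i^{(m+1)}\beta_i<0$ provides a $p_0\in(0,1)$ such that for every $p\in(0,p_0)$ the dominant eigenvalue $-\eta_p$ of $Q^{(m+1)}+p\,\mathrm{diag}(\bm\beta)$ satisfies $\eta_p>0$, with a strictly positive eigenvector $\bm\xi=(\xi_i)_{i\in\S}\gg 0$. Taking $V(x,i)=\xi_i\rho(x)^p$ and using that for $p<1$ one has $\mathcal{L}^{(i)}\rho^p\le p\rho^{p-1}\mathcal{L}^{(i)}\rho\le p\beta_i\rho^p$, I would obtain, for $|x|>R$, $\mathscr{A}V(x,i)\le\big(p\beta_i\xi_i+Q^{(m+1)}\bm\xi(i)\big)\rho(x)^p=-\eta_p\xi_i\rho(x)^p=-\eta_pV(x,i)<0$. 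Since $0<\min_i\xi_i\le\max_i\xi_i<\infty$, $V$ inherits the behaviour of $\rho$ at infinity; in the transient case $V$ is moreover bounded, a positive continuous $\rho$ with $\rho\to 0$ at infinity being bounded.

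The construction under $\mathrm{(L4)}$ mirrors Case~2 of Theorem~\ref{thm-3}: the Fredholm alternative, applied to $Q^{(m+1)}$ using $\sum_{i\in\S}\pi_i^{(m+1)}\beta_i<0$, yields a constant $c>0$ and a vector $\bm\xi=(\xi_i)\gg 0$ with $Q^{(m+1)}\bm\xi(i)=-c-\beta_i$ for all $i\in\S$. With $V(x,i)=\rho(x)+\xi_ih(x)$ one computes, for $|x|>r_0\vee\alpha_m$, $\mathscr{A}V(x,i)=\mathcal{L}^{(i)}\rho(x)+Q^{(m+1)}\bm\xi(i)h(x)+\xi_i\mathcal{L}^{(i)}h(x)\le\big(-c+\xi_i\,\mathcal{L}^{(i)}h(x)/h(x)\big)h(x)$, and since $\bm\xi$ is bounded and $\mathcal{L}^{(i)}h/h\to 0$ at infinity one can enlarge $R$ so that $\mathscr{A}V(x,i)\le-\frac{c}{2}h(x)\le 0$ on $\{|x|>R\}$. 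Using $h/\rho\to 0$, $V$ again tracks $\rho$: norm-like when $\rho\to\infty$, and $V\to 0$ (with $V$ bounded) when $\rho\to 0$, since then $h\to 0$ as well. The same dichotomy follows.

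I do not expect a deep obstacle, the argument being parallel to Theorem~\ref{thm-3}, but three points need care. First, verifying that $\mathscr{A}$ genuinely collapses to $\mathcal{L}^{(i)}+Q^{(m+1)}$ on $\{|x|>R\}$, which is exactly where the threshold structure \eqref{a-3} enters. Second, the \emph{quantitative} drift bound $\mathscr{A}V\le-\eta_pV$ (resp. $\le-\frac{c}{2}h$) rather than mere non-positivity, which is what \cite{Sh15a} needs to conclude ergodicity as opposed to plain recurrence. Third, and most delicate, transferring the scalar estimate into transience of the joint process: one runs the usual optional-stopping argument with $\tau_R=\inf\{t:|X_t|\le R\}$, so that $V(X_{t\wedge\tau_R},\La_{t\wedge\tau_R})$ is a bounded supermartingale and hence $\mathbb{P}_{(x,i)}(\tau_R<\infty)\le V(x,i)/\inf_{|y|=R,\,j}V(y,j)\to 0$ as $|x|\to\infty$, and then one appeals to the irreducibility/petite-set structure used in \cite{Sh15a} to upgrade this to transience of $(X_t,\La_t)$.
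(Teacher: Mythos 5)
Your proposal is correct and follows essentially the same route as the paper: under $\mathrm{(L3)}$ the Perron--Frobenius/\cite{Bar} eigenvector with $V(x,i)=\xi_i\rho(x)^p$ giving $\mathscr{A}V\le-\eta_pV$ on $\{|x|>r_1\}$ with $r_1>\max\{r_0,\alpha_m\}$, and under $\mathrm{(L4)}$ the Fredholm-alternative vector with $V(x,i)=\rho(x)+\xi_ih(x)$ giving $\mathscr{A}V\le-\frac{c}{2}h$, followed by the Foster--Lyapunov drift criteria. Your explicit remarks on why $\mathscr{A}$ collapses to $\mathcal{L}^{(i)}+Q^{(m+1)}$ beyond $\alpha_m$ and on the supermartingale argument for transience only flesh out steps the paper delegates to the cited drift conditions.
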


\begin{proof} Let us follow the line of \cite[Theorem 2.1,Theorem 3.1]{Sh15a} to prove this theorem.

\noindent\textbf{Case 1}. Suppose (L3) holds. Let $r_1\!>\!\max\{r_0,\alpha_m\}$ and $Q_p^{(m+1)}\!:=Q^{(m+1)}\!+\!p\,\mathrm{diag}({\bm \beta})$ for $p>0$. Since $\sum_{i\in\S}\pi_i^{(m+1)}\beta_i<0$, due to \cite{Bar}, there exists $p_0>0$ such that
for any $0<p<p_0$,
\[-\eta_p:=\max\{\mathrm{Re}\,\gamma;\,\gamma\in \text{spectrum of $Q_p^{(m+1)}$}\}<0,\] and a corresponding eigenvector ${\bm\xi}=(\xi_i)\gg 0$. Let $V(x,i)=\xi_i\rho(x)^p$, then
\begin{align*}
  \mathscr{A}V(x,i)&=Q(x){\bm\xi}(i)\rho(x)^p+\xi_i \mathcal{L}^{(i)}\rho(x)^p\\
  &\leq \big(Q^{(m+1)}{\bm \xi}(i)+p\beta_i\xi_i\big)\rho(x)^p\\
  &=-\eta_p\xi_i\rho(x)^p=-\eta_pV(x,i), \quad \ i\in\S,\  |x|>r_1.
\end{align*}
Therefore, according to the Lyapunov drift condition(cf. \cite{PP}), $(X_t,\La_t)$ is exponentially ergodic (hence, ergodic) if $\lim_{|x|\to \infty} \rho(x)=\infty$; is transient if $\lim_{|x|\to\infty}\rho(x)=0$.

\noindent\textbf{Case 2}. Suppose (L4) holds. Due to the Fredholm alternative, $\sum_{i\in\S}\pi_i^{(m+1)}\!\beta_i<0$ implies that there are $c>0$ and ${\bm \xi}=(\xi_i)\gg 0$ such that
\[Q^{(m+1)}\!{\bm \xi}(i)=\sum_{j\neq i}q_{ij}^{(m+1)}\!(\xi_j-\xi_i)=-c-\beta_i,\quad i\in\S.\]
Put $V(x,i)=\rho(x)+\xi_i h(x)$. Then, as $\lim_{|x|\to\infty}\frac{\mathcal{L}^{(i)}h(x)} {h(x)}=0$, there exists $r_1>\max\{r_0,\alpha_m\}$ with $r_0$ given in (L4) such that
\[\xi_i\frac{\mathcal{L}^{(i)} h(x)}{h(x)} <\frac{c}{2},\quad i\in \S,\ |x|>r_1,\]
and hence
\begin{align*}
  \mathscr{A}V(x,i)&\leq \Big(Q^{(m+1)}{\bm \xi} (i)+\beta_i+\xi_i\frac{\mathcal{L}^{(i)} h(x)}{h(x)}\Big) h(x)\\
  &\leq (-c+\frac c 2)h(x)\leq 0,\quad \ |x|>r_1, \ i\in\S.
\end{align*}
Using the Lyapunov drift condition(cf. \cite{PP}), $(X_t,\La_t)$ is ergodic if $\lim_{|x|\to \infty}\rho(x)=\infty$; is transient if $\lim_{|x|\to \infty} \rho(x)=0$. The proof is completed.
\end{proof}

We go to deal with the 1-dimensional case. Consider the stochastic hybrid system $(X_t,\La_t)$ in $\R\times \S$ satisfying \eqref{a-1}, \eqref{a-2} and \eqref{a-4}. To determine the ergodic property of $(X_t,\La_t)$, based on above discussion, $(q_{ij}^{(0)})$ and $(q_{ij}^{(m+1)})$ will play an important role.

\begin{mythm}\label{thm-6}
Let $(X_t,\La_t)$ be a hybrid system in $\R\!\times\!\S$ satisfying \eqref{a-1}, \eqref{a-2} and \eqref{a-4}. Suppose that $\mathrm{(A1)}$, $\mathrm{(A2)}$ hold and one of $\mathrm{(L3)}$ and $\mathrm{(L4)}$ holds. Let $(\pi_i^{(0)})$ and $(\pi_i^{(m+1)})$ be the invariant probability measure of $(q_{ij}^{(0)})$ and $(q_{ij}^{(m+1)})$ respectively.
\begin{itemize}
  \item[(1)] If $\max\big\{\sum\limits_{i\in\S}\pi_i^{(0)} \beta_i,  \sum\limits_{i\in\S} \pi_i^{(m+1)}\beta_i\big\} <0$, and $\lim_{|x|\to \infty} \rho(x)=\infty$, then $(X_t,\La_t)$ is ergodic.
  \item[(2)] If (i) $\sum\limits_{i\in\S} \pi_i^{(0)}\beta_i<0$ and $\lim_{x\to -\infty} \rho(x)=0$, or (ii) $\sum\limits_{i\in\S}\pi_i^{(m+1)}\beta_i<0$ and $\lim_{x\to \infty} \rho(x)=0$, then $(X_t,\La_t)$ is transient.
\end{itemize}
\end{mythm}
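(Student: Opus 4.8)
The plan is to mimic the structure of the proof of Theorem \ref{thm-5}, but to handle the two ends of the real line separately, since on any neighborhood of $+\infty$ the switching rate equals $(q_{ij}^{(m+1)})$, while on any neighborhood of $-\infty$ it equals $(q_{ij}^{(0)})$. For part (1), let $r_1>\max\{r_0,|\alpha_0|,|\alpha_m|\}$, so that for $|x|>r_1$ the matrix $Q(x)$ coincides with $Q^{(m+1)}$ when $x>r_1$ and with $Q^{(0)}$ when $x<-r_1$. Under hypothesis (L3), using the Perron--Frobenius/resolvent argument from \cite{Bar} (as in Case 1 of Theorem \ref{thm-5}) applied to each of the two matrices $Q^{(0)}+p\,\mathrm{diag}({\bm\beta})$ and $Q^{(m+1)}+p\,\mathrm{diag}({\bm\beta})$, the conditions $\sum_i\pi_i^{(0)}\beta_i<0$ and $\sum_i\pi_i^{(m+1)}\beta_i<0$ give, for $p>0$ small, two positive eigenvectors ${\bm\xi}^{(0)}=(\xi_i^{(0)})\gg0$, ${\bm\xi}^{(m+1)}=(\xi_i^{(m+1)})\gg0$ and $\eta_p^{(0)},\eta_p^{(m+1)}>0$ with $(Q^{(0)}+p\,\mathrm{diag}({\bm\beta})){\bm\xi}^{(0)}(i)=-\eta_p^{(0)}\xi_i^{(0)}$ and similarly for $m+1$. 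I would then define $V(x,i)=\xi_i^{(m+1)}\rho(x)^p$ for $x>r_1$ and $V(x,i)=\xi_i^{(0)}\rho(x)^p$ for $x<-r_1$ (and extend $V$ arbitrarily but $C^2$ and positive on the compact middle region $[-r_1,r_1]$). On each of the two outer regions the computation of Theorem \ref{thm-5} gives $\mathscr{A}V(x,i)\le -\eta_p^{(0)}\wedge\eta_p^{(m+1)}\,\xi_i^{(\cdot)}\rho(x)^p<0$; together with $\lim_{|x|\to\infty}\rho(x)=\infty$ this is a Foster--Lyapunov drift condition towards the compact set $\{|x|\le r_1\}$, so \cite{PP} yields ergodicity. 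Under hypothesis (L4) one argues the same way via the Fredholm alternative: $\sum_i\pi_i^{(0)}\beta_i<0$ and $\sum_i\pi_i^{(m+1)}\beta_i<0$ give $c>0$ and ${\bm\xi}^{(0)},{\bm\xi}^{(m+1)}\gg0$ solving $Q^{(0)}{\bm\xi}^{(0)}(i)=-c-\beta_i$ and $Q^{(m+1)}{\bm\xi}^{(m+1)}(i)=-c-\beta_i$; set $V(x,i)=\rho(x)+\xi_i^{(m+1)}h(x)$ for $x>r_1$ and $V(x,i)=\rho(x)+\xi_i^{(0)}h(x)$ for $x<-r_1$, and enlarge $r_1$ so that $\xi_i^{(\cdot)}\mathcal{L}^{(i)}h(x)/h(x)<c/2$ there, giving $\mathscr{A}V(x,i)\le-\tfrac{c}{2}h(x)\le0$ for $|x|>r_1$; again \cite{PP} applies.

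For part (2), transience is a one-sided statement, so only one end of the line matters and we need a bounded Lyapunov function that is small near that end. Consider case (ii): $\sum_i\pi_i^{(m+1)}\beta_i<0$ and $\lim_{x\to+\infty}\rho(x)=0$. Pick $r_1>\max\{r_0,\alpha_m\}$. Under (L3), take $p>0$ small, ${\bm\xi}={\bm\xi}^{(m+1)}\gg0$, $\eta_p>0$ as above, and set $V(x,i)=\xi_i\rho(x)^p$ for $x>r_1$; then for $x>r_1$, $\mathscr{A}V(x,i)=(Q^{(m+1)}{\bm\xi}(i)+p\beta_i\xi_i)\rho(x)^p=-\eta_pV(x,i)\le0$, so $V(X_{t\wedge\tau},\La_{t\wedge\tau})$ is a nonnegative supermartingale up to the exit time $\tau$ of $(r_1,\infty)$; since $\lim_{x\to+\infty}\rho(x)=0$ gives $\inf_{x>r_1}\rho(x)^p=0$ while $\rho$ is bounded below away from zero on any compact piece of $\{x=r_1\}$, the standard comparison (cf. the transience half of \cite[Theorem 2.1, 3.1]{Sh15a}) shows that starting far enough to the right the process escapes to $+\infty$ with positive probability and $|X_t|\to\infty$, i.e. $(X_t,\La_t)$ is transient. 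Under (L4), replace $V$ by $V(x,i)=\rho(x)+\xi_i h(x)$ on $\{x>r_1\}$ (with ${\bm\xi}$ from the Fredholm alternative for $Q^{(m+1)}$ and $r_1$ enlarged so $\xi_i\mathcal{L}^{(i)}h(x)/h(x)<c/2$), so that $\mathscr{A}V\le-\tfrac{c}{2}h(x)\le0$ on $\{x>r_1\}$ and $\lim_{x\to+\infty}V(x,i)=0$ by $\lim_{x\to\infty}\rho(x)=\lim_{x\to\infty}h(x)/\rho(x)=0$; the same supermartingale/comparison argument gives transience. Case (i) is symmetric, working on $\{x<-r_1\}$ with $(q_{ij}^{(0)})$ and $\lim_{x\to-\infty}\rho(x)=0$.

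The routine parts (eigenvector construction, the It\^o/drift computation, the appeal to \cite{PP} and \cite{Sh15a}) are exactly as in Theorem \ref{thm-5}, so I expect the only genuine point requiring care to be the \emph{gluing} of the two outer Lyapunov functions into a single $V$ that is $C^2$, positive, and bounded below on $\R\times\S$ for part (1): the eigenvectors ${\bm\xi}^{(0)}$ and ${\bm\xi}^{(m+1)}$ need not match, and one must check that the drift inequality $\mathscr{A}V\le0$ is only needed for $|x|>r_1$ (a compact set is being used as the recurrence target), so the behavior of $V$ on $[-r_1,r_1]$ is immaterial beyond regularity and positivity — this is why enlarging $r_1$ past both $\alpha_0$ and $\alpha_m$ and past $r_0$ is the key bookkeeping step. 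For part (2) no gluing is needed since transience is one-sided, and the only subtlety is to ensure the function is bounded on the relevant half-line so that the supermartingale argument forces escape to infinity rather than merely ruling out return; this is guaranteed by $\lim\rho=0$ together with (in the (L4) case) $\lim h/\rho=0$.
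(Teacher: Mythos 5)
Your proposal is correct and follows essentially the same route as the paper: for part (1) the paper likewise glues the two Fredholm-alternative Lyapunov functions $\rho+\tilde\xi_i h$ (for $x<\alpha_0-1$) and $\rho+\xi_i h$ (for $x>\alpha_m+1$) into a single $C^2$ function and invokes the Lyapunov drift condition, and for part (2) it carries out exactly your one-sided supermartingale argument, made explicit via Dynkin's formula with the two stopping times $\tau_{\kappa_1}$, $\eta_{\kappa_2}$ and the limit $\kappa_2\to\infty$ (using (A1) for non-explosion and $\lim_{x\to\infty}V(x,i)=0$ to get $\p(\tau_{\kappa_1}=\infty)>0$). The only cosmetic difference is that the paper writes out the (L4) case and leaves (L3) to the reader, whereas you sketch both.
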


\begin{proof}
  We only prove this theorem under the assumption that (L4) holds. The case that (L3) holds can be proved by modifying the choice of $V(x,i)$ as that in Theorem \ref{thm-5}, and hence the details are omitted.

  (1) Since $\sum_{i\in\S}\pi_i^{(0)}\beta_i<0$ and $\sum_{i\in\S}\pi_i^{(m+1)}\beta_i<0$, the Fredholm alternative yields that there exist $c>0$, ${\bm \xi}=(\xi_i)\gg 0$, $\tilde {\bm \xi}=(\tilde\xi_i)\gg 0$, such that
  \[Q^{(0)}\tilde{\bm \xi}(i)=-c-\beta_i,\quad Q^{(m+1)}{\bm \xi} (i)=-c-\beta_i,\quad i\in\S.\]
  It is clear that there exists a function $V\in C^2(\R\times\S)$ satisfying
  \begin{equation}\label{h-1}
  V(x,i)=\begin{cases}
    \rho(x)+\tilde\xi_ih(x), & x<\alpha_0-1,\\
    \rho(x)+\xi_i h(x), &x>\alpha_m+1,
  \end{cases}\qquad i\in\S.
  \end{equation}
  As $\lim_{|x|\to \infty}\frac{\mathcal{L}^{(i)}h(x)}{h(x)} =0$, there exists $r_1>\max\{|\alpha_0-1|,|\alpha_m+1|\}$ such that
  \[\xi_i\frac{\mathcal{L}^{(i)} h(x)}{h(x)}<\frac c2,\quad \text{and}\ \ \tilde \xi_i\frac{\mathcal{L}^{(i)}h(x)}{h(x)} <\frac c2,\quad |x|>r_1,\, i\in\S.
  \]
  Hence,
  \begin{equation}\label{h-2}
  \begin{split}
  \mathscr{A}V(x,i)&=\begin{cases} \mathcal{L}^{(i)}\rho(x)+ \xi_i\mathcal{L}^{(i)}h(x)+ Q^{(m+1)}{\bm \xi}(i) h(x), &x>r_1,\\
  \mathcal{L}^{(i)}\rho(x)+ \tilde \xi_i\mathcal{L}^{(i)}h(x)+ Q^{(m+1)}{\tilde{\bm \xi}}(i) h(x), &x<-r_1,
  \end{cases}
  \\
  &\leq -\frac c2h(x)<0, \quad |x|>r_1,\ i\in\S.
  \end{split}
  \end{equation}
  Using the Lyapunov drift condition, we get $(X_t,\La_t)$ is ergodic.

  (2) We consider the situation (ii) $\sum_{i\in\S}\pi_i^{(m+1)}\beta_i<\infty$ and $\lim_{x\to \infty}\rho(x)=0$, and the case (i) can be proved in a similar way.

  The Fredholm alternative yields from $\sum_{i\in\S}\pi_i^{(m+1)}\beta_i<0$ that there exists $c>0$ and ${\bm\xi}=(\xi_i)\gg 0$ such that
  \[Q^{(m+1)}{\bm \xi}(i)=-c-\beta_i,\quad \quad i\in\S.\]
  The fact $\lim_{|x|\to \infty}\frac{\mathcal{L}^{(i)}h(x)} {h(x)}=0$ tells us the existence of $r_1>\max\{r_0,\alpha_m\}$ such that
  \begin{equation}\label{h-3}
  \xi_i\frac{\mathcal{L}^{(i)}h(x)}{h(x)}<\frac c2,\quad \ \forall\,x>r_1,\ i\in\S.
  \end{equation}
  Let $V(x,i)=\rho(x)+\xi_ih(x)$, then
  \begin{equation}\label{h-4}
  \mathscr{A}V(x,i)\leq \Big(\beta_i +Q^{(m+1)}{\bm \xi}(i) +\xi_i\frac{\mathcal{L}^{(i)}h(x)}{h(x)} \Big) h(x)
  \leq -\frac c 2h(x)<0,\quad i\in\S, \ x>r_1.
  \end{equation}
  Since $\lim_{|x|\to \infty} \frac{h(x)}{\rho(x)}=0$, there exist $x_0,\,\kappa_1,\,\kappa_2>r_1$ such that $r_1<\kappa_1<x_0<\kappa_2$, and
  \begin{equation}\label{h-5}
    \rho(\kappa_1)+\big(\inf_{i\in\S}\xi_i\big) h(\kappa_1)>\rho(x_0)+\xi_{i_0} h(x_0)\quad \text{for some $i_0\in\S$}.
  \end{equation}
  Taking the initial point $(X_0^{x_0,i_0},\La_0^{x_0,i_0})=(x_0,i_0)$, define the stopping times
  \[\tau_{\kappa_1}=\inf\{t>0; X_t^{x_0,i_0}<\kappa_1\},\quad \eta_{\kappa_2}=\inf\{t>0; X_t^{x_0,i_0}>\kappa_2\}.\]
  By Dynkin's formula,
  \begin{equation*}
  \begin{split}
    \E V\big(X^{x_0,i_0}(t\!\wedge \! \tau_{\kappa_1}\!\wedge\! \eta_{\kappa_2}), \La^{x_0,i_0}(t\!\wedge \! \tau_{\kappa_1}\!\wedge\! \eta_{\kappa_2})\big)&=V(x_0,i_0)\!+\!\E\Big[ \!\int_0^{ t \wedge   \tau_{\kappa_1}\!\wedge  \eta_{\kappa_2} } \!\!\!\! \mathscr{A}V(X_s^{x_0,i_0}, \La_s^{x_0,i_0})\d s\Big]\\
    &\leq V(x_0,i_0).
    \end{split}
  \end{equation*}
  Letting $t\to \infty$, it follows
  \[V(x_0,i_0)\geq \inf_{i\in\S}V(\kappa_1,i)\p (\tau_{\kappa_1}<\eta_{\kappa_2}) +\inf_{i\in\S} V(\kappa_2,i) \p(\eta_{\kappa_2}<\tau_{\kappa_1}).\]
  Then, due to \eqref{h-5},
  \begin{equation}\label{h-6}
  \p(\tau_{\kappa_1}>\eta_{\kappa_2})\geq
  \frac{\inf_{i\in\S}V(\kappa_1,i)-V(x_0,i_0)} {\inf_{i\in\S}V(\kappa_1,i)-\inf_{i\in\S} V(\kappa_2,i)}.
  \end{equation}
  The linear growth condition (A1) implies that $\lim_{\kappa_2\to\infty} \eta_{\kappa_2}=\infty$ almost surely. Letting $\kappa_2\to \infty$, noting $\lim_{\kappa_2\to \infty} V(\kappa_2,i)=0$, we have
  \begin{equation}\label{h-7}
  \p(\tau_{\kappa_1}=\infty)\geq \frac{\inf_{i\in\S} V(\kappa_1,i)-V(x_0,i_0)}{\inf_{i\in\S} V(\kappa_1,i)}>0.
  \end{equation}
  Hence, the process $(X_t^{x_0,i_0},\La_t^{x_0,i_0})$ is transient, and the proof is complete.
\end{proof}

\begin{myexam}\label{exam-2}
Consider the Ornstein-Uhlenbeck type process $(X_t,\La_t)$ satisfying
\begin{equation}\label{a-5}
\d X_t=b_{\La_t}X_t\d t+\sigma_{\La_t}\big(X_t^2\wedge |X_t|\big)\d B_t,
\end{equation} and \eqref{a-2}
with  $(q_{ij}(x))_{i,j\in\S}$ given by
\begin{equation}\label{a-6}
q_{ij}(x)=q_{ij}^{(1)}\mathbf1_{[0,\alpha_1)}(x)\!+\! \tilde q_{ij}^{(1)} \mathbf1_{(\alpha_{-1},0)}(x)\!+ \!q_{ij}^{(2)}\mathbf1_{[\alpha_1,\infty)}(x)  \!+\!\tilde q_{ij}^{(2)}\mathbf1_{(-\infty,\alpha_{-1}]}(x),
\end{equation}
where $(q_{ij}^{(1)})$, $(q_{ij}^{(2)})$, $(\tilde q_{ij}^{(1)})$, and $(\tilde q_{ij}^{(2)})$ are all conservative, irreducible $Q$-matrices on $\S$; $ \alpha_{-1}<0<\alpha_1 $
Then, $(X_t,\La_t)$ is exponentially ergodic if $\max\big\{\sum_{i\in\S} \pi_i^{(2)}\mu_i,\sum_{i\in \S} \tilde \pi_i^{(2)}\mu_i\big\}<0$; $(X_t,\La_t)$ is transient if $\max\big\{\sum_{i\in\S} \pi_i^{(2)}\mu_i,\sum_{i\in \S} \tilde \pi_i^{(2)}\mu_i\big\}>0$.
\end{myexam}

These assertions can be proved by using Theorem \ref{thm-6}. Indeed, there is $\rho\in C^2(\R)$ such that $\rho(x)=|x|$ for $|x|\geq r_0>0$. Then, for each $i\in\S$,
\[\mathcal{L}^{(i)} \rho(x)=\mu_i \rho(x),\quad |x|\geq r_0,\]
and hence $(X_t,\La_t)$ is exponentially ergodic if $\max\big\{\sum_{i\in\S} \pi_i^{(2)}\mu_i,\sum_{i\in \S} \tilde \pi_i^{(2)}\mu_i\big\}<0$ by virtue of Theorem \ref{thm-6} (1).

To justify the transience, let $\rho(x)=|x|^{-\gamma}$ with $\gamma>0$ for $|x|>r_0>0$. Then,
\[\mathcal{L}^{(i)} \rho(x)\leq (-\gamma \mu_i+\frac1{r_0})\rho(x),\quad |x|>r_0,\ i\in\S.\]
If $\max\big\{\sum_{i\in\S} \pi_i^{(2)}\mu_i,\sum_{i\in \S} \tilde \pi_i^{(2)}\mu_i\big\}>0$, there exists $r_0>0$ large enough such that
\[\max\Big\{-\gamma \sum_{i\in\S} \pi_i^{(2)}\mu_i+\frac1{r_0},-\gamma \sum_{i\in \S} \tilde \pi_i^{(2)}\mu_i+\frac1{\gamma_0} \Big\}<0.\]
Since $\lim_{|x|\to \infty} \rho(x)=0$, it follows from Theorem \ref{thm-6} (2) that $(X_t,\La_t)$ is transient in this case.

In the end of this work, we provide a result on the ergodicity of state-dependent hybrid systems with $(q_{ij}(x))$ being Lipschitz continuous in $x$, which can simplify greatly the investigation of ergodicity for certain class of state-dependent regime-switching processes.

Let $Q(x)=(q_{ij}(x))$ be a conservative, irreducible $Q$-matrix on $\S$ for every $x\in \R^d$, and
\[|q_{ij}(x)-q_{ij}(y)|\leq K_4|x-y|,\quad x,\,y\in\R^d,\ i,\,j\in\S,\]
for some $K_4>0$. Assume (A1) holds. Let $(X_t,\La_t)$ be a hybrid system satisfying \eqref{a-1}, \eqref{a-2}.

\begin{mythm}\label{thm-7}
Assume that the limit
\begin{equation}\label{f-9}
\lim_{|x|\to\infty} Q(x)=Q
\end{equation} exists and $Q$ is still irreducible. Denote by $(\pi_i)$ the invariant probability measure of $Q$. Suppose $\mathrm{(L3)}$ or $\mathrm{(L4)}$ holds, and
\begin{equation}\label{f-10}
\sum_{i\in\S}\pi_i\beta_i<0.
\end{equation} Then $(X_t,\La_t)$ be expenentially ergodic if $\lim_{|x|\to\infty} \rho(x)=\infty$; is transient if $\lim_{|x|\to \infty} \rho(x)=0$.
\end{mythm}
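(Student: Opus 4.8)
The plan is to argue directly, building a Foster--Lyapunov function for the joint generator $\mathscr{A}$, rather than passing through the approximation scheme of Section~3, which only controls the processes on finite time intervals and so carries no information about the long-time behaviour. The whole argument is a perturbation of the proof of Theorem~\ref{thm-5}: there $Q(x)$ coincides \emph{exactly} with the boundary matrix $Q^{(m+1)}$ once $|x|$ exceeds the largest threshold, whereas here we only know $Q(x)\to Q$; the point is that this suffices, since the strict negativity produced by Perron--Frobenius (resp.\ the Fredholm alternative) leaves room to absorb the error. First I would record two elementary facts. Since $\S$ is finite, \eqref{f-9} is equivalent to $\|Q(x)-Q\|_{\ell_1}\to 0$ as $|x|\to\infty$; and letting $|x|\to\infty$ in $\sum_j q_{ij}(x)=0$ shows that $Q$ is conservative, so that, together with the assumed irreducibility, it has a unique invariant probability $(\pi_i)$. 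Wellposedness of $(X_t,\La_t)$ under (A1) and the Lipschitz bound on $Q(\cdot)$ is classical, so $\mu_t$ is well defined.

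\textbf{Case (L3).} Put $Q_p:=Q+p\,\diag(\bm\beta)$. By Perron--Frobenius and \cite[Proposition~4.2]{Bar}, the hypothesis $\sum_i\pi_i\beta_i<0$ furnishes $p_0>0$ such that for every $p\in(0,p_0)$ one has $-\eta_p:=\max\{\mathrm{Re}\,\gamma:\gamma\in\mathrm{spec}(Q_p)\}<0$ with a strictly positive eigenvector $\bm\xi=(\xi_i)\gg0$, i.e.\ $Q\bm\xi(i)+p\beta_i\xi_i=-\eta_p\xi_i$ for all $i$. Fix $p\in(0,p_0\wedge1)$ and set $V(x,i)=\xi_i\rho(x)^p$. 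Since $p\le1$, concavity of $t\mapsto t^p$ together with (L3) gives $\mathcal{L}^{(i)}\rho(x)^p\le p\beta_i\rho(x)^p$, hence
\[
\mathscr{A}V(x,i)=Q(x)\bm\xi(i)\rho(x)^p+\xi_i\mathcal{L}^{(i)}\rho(x)^p\le\big(-\eta_p\xi_i+(Q(x)-Q)\bm\xi(i)\big)\rho(x)^p.
\]
As $|(Q(x)-Q)\bm\xi(i)|\le 2(\max_k\xi_k)\|Q(x)-Q\|_{\ell_1}\to0$ while $\xi_i\ge\min_k\xi_k>0$, there is $r_1\ge r_0$ with $\mathscr{A}V(x,i)\le-\tfrac12\eta_p\,V(x,i)$ for all $|x|>r_1$, $i\in\S$. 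The Lyapunov drift condition (cf.\ \cite{PP}) then yields exponential ergodicity when $\lim_{|x|\to\infty}\rho(x)=\infty$; when $\lim_{|x|\to\infty}\rho(x)=0$ the same $V$ (with $\mathscr{A}V\le0$ off a ball and $V\to0$ at infinity) gives transience via the standard escape-probability estimate, as in the proof of Theorem~\ref{thm-6}(2).

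\textbf{Case (L4).} The Fredholm alternative, applied with $\sum_i\pi_i\beta_i<0$, provides $c>0$ and $\bm\xi=(\xi_i)\gg0$ with $Q\bm\xi(i)=-c-\beta_i$. Taking $V(x,i)=\rho(x)+\xi_ih(x)$ and using (L4),
\[
\mathscr{A}V(x,i)=\mathcal{L}^{(i)}\rho(x)+\xi_i\mathcal{L}^{(i)}h(x)+Q(x)\bm\xi(i)h(x)\le\Big(-c+\xi_i\frac{\mathcal{L}^{(i)}h(x)}{h(x)}+(Q(x)-Q)\bm\xi(i)\Big)h(x).
\]
Since $\bm\xi$ is bounded, $\mathcal{L}^{(i)}h/h\to0$ and $\|Q(x)-Q\|_{\ell_1}\to0$, there is $r_1\ge r_0$ with $\mathscr{A}V(x,i)\le-\tfrac c2 h(x)<0$ for $|x|>r_1$, $i\in\S$, and one concludes ergodicity (resp.\ transience) exactly as in Case (L3).

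\textbf{Main obstacle.} None of the individual estimates is delicate; the real content is to quantify that the perturbation $Q(x)-Q$ is eventually negligible against the spectral gap $\eta_p$ (resp.\ the constant $c$), so that the drift inequality survives \emph{uniformly} outside a sufficiently large ball — this is precisely where the existence of the limit \eqref{f-9} enters, and it is the only place the hypothesis on $Q(\cdot)$ is used beyond wellposedness. A secondary point to state carefully is the restriction $p<1$ in Case (L3): it is needed so that the bound on $\mathcal{L}^{(i)}\rho$ transfers to $\mathcal{L}^{(i)}\rho^p$ despite the (nonnegative) second-order contribution.
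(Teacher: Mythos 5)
Your proof is correct and follows essentially the same route as the paper: a Foster--Lyapunov construction perturbing Theorem \ref{thm-5}, with the Fredholm alternative giving $V(x,i)=\rho(x)+\xi_i h(x)$ in case (L4) and the error $Q(x)\bm\xi(i)-Q\bm\xi(i)$ absorbed into the constant $c$ (the paper splits it as $c/4+c/4$, you as $c/2$) for $|x|$ large, then concluding by the Lyapunov drift condition. The only difference is that you write out the (L3) case and the $p<1$ concavity point explicitly, whereas the paper delegates these to the argument of Theorem \ref{thm-3}; this is a welcome filling-in of detail rather than a different method.
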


\begin{proof}
  We prove this theorem under the assumption that (L4) holds. The case (L3) holds can be proved by using the Perron-Frobenius theorem similar to the proof of Theorem \ref{thm-3}.

According to the Fredholm alternative, due to $\sum_{i\in\S}\pi_i\beta_i<0$, we obtain that there exist $c>0$ and ${\bm \xi}=(\xi_i)\gg 0$ such that
\[Q{\bm\xi}(i)=-c-\beta_i,\quad \ i\in\S.\]
It follows from \eqref{f-9}, $\lim_{|x|\to\infty}\frac{\mathcal{L}^{(i)}h(x)} {h(x)}=0$ that there exists $r_1>r_0 $ with $r_0$ given in (L4) such that
\[|Q(x){\bm\xi}(i)-Q{\bm\xi}(i)|\leq \frac{c}{4},\quad \xi_i\frac{\mathcal{L}^{(i)}h(x)}{h(x)}<\frac c4,\quad \,|x|>r_1, \ i\in\S.\]
Taking $V(x,i)=\rho(x)+\xi_ih(x)$, we get
\begin{align*}
\mathscr{A}V(x,i)&\leq \Big(\beta_i+Q(x){\bm \xi}(i)+\xi_i \frac{\mathcal{L}^{(i)}h(x)}{h(x)}\Big)h(x)\\
&\leq \big(\beta_i+Q{\bm\xi}(i)+\frac c2\big)h(x)\\
&\leq -\frac c 2h(x)<0,\quad \quad  \, |x|>r_1,\ i\in\S.
\end{align*}
Consequently, the desired conclusion follows from the Lyapunov drift condition (cf. \cite{PP}).
\end{proof}

\end{document}